\newtheorem{theo}{Theorem}
\newtheorem{prop}{Proposition}
\newtheorem{lem}{Lemma}
\newtheorem{cor}{Corollary}
\newtheorem{defi}{Definition}
\renewcommand{\div}{{\rm div \,}}
\newcommand{\sgn}{\,\mbox{sgn}\,}
\newcommand{\esssup}[1]{\mathop{\rm ess\ sup}}
\newcommand{\essinf}[1]{\mathop{\rm ess\ inf}}
\newcommand{\N}{{\rm I\kern - 2.5pt N}}
\newcommand{\Z}{{\rm Z\kern - 5.5pt Z}}
\newcommand{\Q}{{\rm I\kern - 5.25pt Q}}
\newcommand{\C}{{\rm I\kern - 6.25pt C}}
\newcommand{\R}{{\rm I\kern - 2.5pt R}}
\begin{document}
\title[Wellposedness of the discontinuous ODE for two-phase flows]{Wellposedness of the discontinuous ODE associated with two-phase flows}
\author[Dieter Bothe]{Dieter Bothe\vspace{0.1in}\\
{\it In memoriam Jan Pr\"u\ss}
\vspace{0.1in}}
\address{Department of Mathematics and Profile Area Thermofluids \&  Interfaces\\
Technical University of Darmstadt\\
Alarich-Weiss-Str.~10\\
D-64287 Darmstadt, Germany}
\email{bothe@mma.tu-darmstadt.de}
\date{May 11, 2019}
\maketitle
$\mbox{ }$\\[-10ex]
\begin{abstract}
We consider the initial value problem
\[
\dot x (t) = v(t,x(t)) \;\mbox{ for } t\in (a,b), \;\; x(t_0)=x_0
\]
which determines the pathlines of a two-phase flow, i.e.\ $v=v(t,x)$ is a given velocity field of the type
\[
v(t,x)= \begin{cases}
v^+(t,x) &\text{ if } x \in \Omega^+(t)\\
v^-(t,x) &\text{ if } x \in \Omega^-(t)
\end{cases}
\]
with $\Omega^\pm (t)$ denoting the bulk phases of the two-phase fluid system under consideration.
The bulk phases are separated by a moving and deforming interface $\Sigma (t)$.
Since we allow for flows with phase change, these pathlines are allowed to cross or touch the interface.
Imposing a kind of transversality condition at $\Sigma (t)$, which is intimately related to the mass balance in such systems, we show existence and uniqueness of absolutely continuous solutions of the above ODE in case the one-sided velocity fields
$v^\pm:\overline{{\rm gr}(\Omega^\pm)}\to \R^n$
are continuous in $(t,x)$ and locally Lipschitz continuous in $x$.
Note that this is a necessary prerequisite for the existence of well-defined co-moving control volumes for two-phase flows,
a basic concept for mathematical modeling of two-phase continua.
\end{abstract}
\section[Introduction]{Introduction}
Given  an open interval $J=(a,b)$ in $\R$, an open set $\Omega \subset \R^n$ and $f:J\times \Omega \to \R^n$, we consider the
initial value problem
\begin{equation}\label{IVP}
\dot x (t) = f(t,x(t)) \;\mbox{ for } t\in J, \;\; x(t_0)=x_0
\end{equation}
for $t_0\in J$ and $x_0 \in \Omega$. By the classical result of Peano \cite{Peano}, problem \eqref{IVP} has a local $C^1$-solution
if $f$ is continuous. If $f$ is discontinuous in $t$, solutions will typically not be $C^1$, but absolutely continuous (a.c.\ for short) such that
\begin{equation}
x(t) = x_0 + \int_{t_0}^t f(s, x(s)) \,ds \; \mbox{ for all } t\in J.
\end{equation}
We call such a function $x (\cdot)$ an a.c.\ solution and,
again by a classical result named after C.\ Carath\'eodory,
existence of local solutions still holds true if $f$ is Lebesgue measurable in $t$ and continuous in $x$ with local integrable bounds, say
$|f(t,x)|\leq k(t)$ on $J \times \Omega$ with some $k \in L^1 (J)$; see for instance \cite{Hartman} for a proof. The solution is also called a Carath\'eodory solution of \eqref{IVP}.

The situation is more involved if $f$ is discontinuous in $x$, as it happens if $f$ denotes the velocity field in a two-phase
flow, i.e.\ in the case considered in the present paper.
More generally, discontinuous ODEs appear in several situations and possible applications which lead to such cases can be found
in \cite{Fil, MDE, BoWi, Cortes, Georgescu} and the references given there.
Here, already the simple one-dimensional example of
\[
f(x)=\alpha \sgn(x) \; \mbox{ with }\alpha \in \{-1,1\}
\]
and the sign-function $\sgn(\cdot)$ shows that (\ref{IVP}) may have no solution, a single solution or infinitely many ones, depending also on an appropriate (re-)de\-fi\-ni\-tion of $\sgn(0)$.

One way to proceed in this case is to define the multivalued (=set-valued) regularization $F:J \times \Omega \to 2^{\R^n}\setminus\{\emptyset\}$ of $f$ according to
\begin{equation}\label{E15}
F(t,x):= \bigcap\limits_{\delta>0} \overline{{\rm conv}}\, f\big(t,B_\delta(x)\cap \Omega \big) \quad\text{ for } t \in J,\; x \in \Omega
\end{equation}
and to consider the differential inclusion
\begin{equation}\label{MVIVP}
\dot x \in F(t,x(t))  \;\mbox{ for } t\in J, \;\; x(t_0)=x_0
\end{equation}
instead of \eqref{IVP}.
It is well known (see \cite{MDE}) that, given any locally bounded, measurable function $f$, the map $F$ has the following properties:
$\, F(\cdot,x)$ has a measurable selection
for every $x \in \Omega$, $F(t,\cdot)$ is upper semicontinuous (usc, for short) and $F$ is locally bounded with closed bounded convex values. Due to Theorem~5.2 in \cite{MDE},
 this is sufficient for the local-in-time existence of a.c.\ solutions of the differential inclusion \eqref{MVIVP}
for every $t_0\in J$ and $x_0 \in \Omega$. Here an a.c.\ solution is an absolutely continuous function with $x(t_0)=x_0$
and such that the inclusion in \eqref{MVIVP} holds a.e.\ on $J$.
If $F$ stems from a discontinuous function $f$ via (\ref{E15}), an a.c.\ solution of (\ref{MVIVP}) is also called a
Krasovskii solution of the discontinuous ODE \eqref{IVP}.

A variant of the above concept was introduced by A.F.\ Filippov, considering the more restrictive regularization
\begin{equation}\label{E16}
F(t,x):= \bigcap\limits_{\delta>0} \bigcap\limits_{\mu (N)=0}
\overline{{\rm conv}}\, f\big(t,B_\delta(x)\cap (\Omega \setminus N)\big) \quad\text{ for } t \in J,\; x \in \Omega.
\end{equation}
In this case, an a.c.\ solution of \eqref{MVIVP} is called a Filippov solution of \eqref{IVP} and Theorem~8 in \S 7 of \cite{Fil}
assures that a (local) a.c.\ solution of \eqref{MVIVP} exists for  measurable, integrably bounded $f$.
Observe the difference between the two variants: while the multivalued regularization ${\rm Sgn}(\cdot)$ of the sign-function according to \eqref{E16} has ${\rm Sgn}(0)=[-1,1]$, independently of the definition of ${\rm sgn }(0)$,
one always has ${\rm sgn }(0)\in {\rm Sgn}(0)$ if the latter is defined via \eqref{E15}.

While this approach immediately yields a non-empty set of solutions to the differential inclusion, these are, in general, not solutions to the original (single-valued) ODE. Employing the concept of directional continuity, A.\ Bressan obtained in \cite{Bressan88}
existence of solutions for the original initial value problem \eqref{IVP} if $|f(t,x)|\leq c$ on $J\times \Omega$ and $f$ is continuous along the cone
\[
K_\alpha := \{ (t,x)\in \R^{n+1}: |x|\leq \alpha \, t \}  \quad \mbox{ for some } \alpha >c;
\]
see \cite{Bressan88} for more details on this concept.

Another desired property is uniqueness of solutions, especially in cases where the physics of the problem asks for
single solutions to the initial value problem such as the two-phase flow problem considered below.
Local Lip\-schitz continuity of $f$ in $x$ is of course sufficient for local existence of a unique solution to \eqref{IVP} due to
the classical Picard-Lindel\"of theorem (cf.\ \cite{Hartman}). Here $f$ is also assumed to be jointly continuous, which can be
relaxed to mere measurability in $t$.
This gives forward and backward uniqueness, but evidently does not apply to $f$ being discontinuous in $x$.
If only forward uniqueness is requested, the weaker one-sided Lipschitz continuity, i.e.\
\begin{equation}\label{onesidedLip}
\langle f(t,x) - f(t,y), x-y \rangle \leq k(t) ||x-y||^2 \;\mbox{ for all } t\in J, x,y\in \Omega
\end{equation}
with $k\in L^1 (J)$ is sufficient.
Note that \eqref{onesidedLip} allows for discontinuous $f$, but imposes strong restrictions on possible jumps of $f$;
e.g., if $f\in C^1(\R \setminus \{0\})$
has one-sided limits $a^\pm$ at $x=0$, then $a^+ \leq a^-$ is necessary.
From \eqref{onesidedLip}, forward uniqueness follows by means of Gronwall's lemma,
since if $x, y$ are a.c.\ solutions of \eqref{IVP}, then
\begin{equation}\label{dissest}
\frac{d}{dt} \frac 1 2 ||x(t)-y(t)||^2 \leq k(t) ||x(t)-y(t)||^2 \;\mbox{ for a.e. } t\in J,
\end{equation}
which is why such $f$ is also said to be of dissipative type.

While $||\,\cdot \,||$ denotes the Euclidean norm in \eqref{dissest}, such an inequality with any other norm yields forward uniqueness as well. This leads to the notion of semi-inner products (as a Banach space substitute for the inner product in Hilbert spaces), a very useful concept also in infinite dimensional Banach spaces; cf.\ \S 10 in \cite{MDE}. So, the choice of different norms on $\R^n$ leads to more flexibility, but the applicability of such uniqueness criteria
is still limited. As a simple example, the semi-inner products on $(\R^n, |\,\cdot \,|_1 )$ with $|x|_1 := \sum_{i=1}^n |x_i|$ are given by
\[
(x,y)_- = |x|_1 \, \sum_{i=1}^n \min \big( y_i \, {\rm Sgn}\, x_i \big), \;\;
(x,y)_+ = |x|_1 \, \sum_{i=1}^n \max \big( y_i \, {\rm Sgn}\, x_i \big),
\]
where ${\rm Sgn}(r)={\rm sgn}(r)$ for $r\neq 0$ and ${\rm Sgn}(0)=[-1,1]\subset \R$. This yields
\[
\sum_{i=1}^n \min \big( (f_i (t,x)-f_i(t,y)) {\rm Sgn} (x_i-y_i)\big) \leq k(t) |x-y|_1
\]
with $k\in L^1 (J)$ as a criterion for forward uniqueness.
Let us note in passing that this is especially useful for $n=2$ in which case the two different sign functions in the sum can cancel.
Indeed, this yields uniqueness for right-hand sides of type
\[
f(t,x)=g(t,x) - l(t) (\phi (x), \phi (x))
\]
whenever $\phi:\R^2 \to \R$ is increasing in both variables, $l\in L^1(J)_+$ and the
$g(t,\cdot)$ are Lipschitz continuous in $x$ (or one-sided Lipschitz w.r.\ to $|\,\cdot \,|_1$) with constant $k(t)$,
where $k\in L^1(J)_+$.
While rather special, this has applications to irreversible chemical reactions; see \cite{Bo15}.

A different criterion for forward uniqueness was established in \cite{Bressan-uniqueness}, building on
the concept of directional continuity.
Theorem~1 in \cite{Bressan-uniqueness} guarantees the existence of a unique forward solution to \eqref{IVP} if $f$ has locally bounded
$K_\alpha$-variation; cf.\ also paragraph A1 in \cite{MDE}.

We consider \eqref{IVP} for discontinuous $f$, representing the velocity field of a two-phase flow inside a domain $\Omega$.
The latter is indicated by writing $v$ instead of $f$ from here on.
Such a flow field typically has jump discontinuities at a moving $C^2$-surface $\Sigma(t)$, separating the two different
fluids in their respective bulk phases $\Omega^\pm (t)$. Inside the phases, $v$ is continuous, allowing for a continuous extension
up to the boundary. Moreover, since we aim at forward and backward uniqueness,
we assume the continuous extensions of $v_{|\Omega^\pm (t)}$ to be locally Lipschitz continuous in $x$.
For this particular setup, we look for conditions on the behavior of $v$ at $\Sigma$ which guarantee wellposedness of \eqref{IVP}.
To the authors knowledge, this case is not appropriately covered by existing results on discontinuous ODEs.

We close this introduction by mentioning a different and ongoing approach to \eqref{IVP} for right-hand sides of low regularity.
If a passive scalar $\phi$ is advected by the flow field $v$, its time evolution is governed by the transport equation
\begin{equation}\label{transport}
\partial_t \phi + v \cdot \nabla \phi = 0, \qquad t\in J, x\in \Omega.
\end{equation}
Then $\phi(\cdot \,, x(\cdot \,;t_0,x_0))\equiv \phi(t_0,x_0)$, where $x(\cdot, t_0,x_0)$ is the solution of \eqref{IVP},
hence the method of characteristics can be applied if \eqref{IVP} is uniquely solvable backwards in time.
In their seminal paper \cite{DiPLions}, DiPerna and Lions initiated the investigation of how the intimate relation between the
ODE \eqref{IVP} and the scalar transport equation \eqref{transport} can be employed to obtain a flow map associated with
\eqref{IVP} for weakly differentiable velocity fields; see \cite{AmbCrippa} for a rather recent overview.
But this approach does not aim at providing solvability of \eqref{IVP} for every initial value; rather, results on the induced flow in the sense of a set-to-set map are obtained.

The main result of the present paper is the wellposedness (with forward and backward uniqueness)
of the ODE associated with the velocity field of a two-phase flow under physically meaningful assumptions.
The core idea is to establish an energy-type estimate like \eqref{dissest}, but with $||\cdot||^2$ replaced by
a different functional related to the jump conditions in two-phase flows.
In order to state our result and motivate the assumptions, some background on the physical model
as well as some auxiliary results on moving hypersurfaces are required.
\section{Sharp interface two-phase flow model}
Consider the continuum mechanical sharp-interface model for two-phase flows with phase change in a domain $\Omega \subset \R^n$ with bulk phases $\Omega^\pm(t)$, separated by a $C^2$-surface $\Sigma(t)$ such that $\Omega^+(t) \cup \Omega^-(t) \cup \Sigma(t)$ is a disjoint decomposition of $\Omega$. 

We assume that $\Sigma(t)$ is an embedded surface in $\R^n$ without boundary;
to avoid technical problems with moving contact lines (see \cite{FKB-2019} concerning mathematical difficulties
with moving contact line modeling), we actually restrict to closed surfaces. Then the balances of mass and momentum read
\begin{align}
	&\partial_t \rho+ \div(\rho v)=0 \quad \text{ in } \Omega\setminus \Sigma,\label{E1}\\
	&\partial_t(\rho v)+ \div(\rho v \otimes v-\mathcal{S})=\rho b \quad \text{ in } \Omega\setminus\Sigma,\label{E2}
\end{align}
	where $\rho$ is the mass density, $v$ the velocity, $\mathcal{S}$ the stress tensor and $b$ denotes body forces. At $\Sigma$, the transmission conditions
\begin{align}
	&\llbracket\rho(v-v^\Sigma)\rrbracket \cdot n_\Sigma=0 \quad \text{ on } \Sigma,\label{E3}\\
	&\llbracket\rho v \otimes (v-v^\Sigma)-\mathcal{S}\rrbracket \cdot
	n_\Sigma={\rm div}_\Sigma \, \mathcal{S}^\Sigma \quad \text{ on } \Sigma\label{E4}
\end{align}
are valid, where $v^\Sigma$ is the 
interface velocity, $n_\Sigma$ the interface normal field and $\mathcal{S}^\Sigma$ denotes the interface stress tensor. Note that in (\ref{E3}), (\ref{E4}) only the normal speed of displacement $V_\Sigma:= v^\Sigma \cdot n_\Sigma$ of $\Sigma (\cdot)$ enters;
cf.\ \eqref{VSigma} below for a purely kinematic definition of $V_\Sigma$.

The system (\ref{E1}) -- (\ref{E4}) requires several constitutive relations to arrive at a closed model, i.e.\ a system of PDEs for the unknown variables $\rho, v$; see \cite{Slattery-Interfaces} for more details.
Here, we are only interested in the flow generated by the two-phase velocity field. For this purpose we need to add an information on the tangential part, where we impose the standard no-slip condition, i.e.\ \begin{equation}\label{E5}
\llbracket P_\Sigma v\rrbracket=0 \quad \text{ on } \Sigma
\end{equation}
with the projector $P_\Sigma:= I-n_\Sigma \otimes n_\Sigma$. We also use $v_{||}$ as a shorthand notation for $P_\Sigma v$.
Above, the jump bracket $\llbracket\cdot\rrbracket$ is defined as
\begin{equation}\label{E6}
\llbracket \psi\rrbracket (t,x):=\lim\limits_{h \to 0+} \Big(\psi\big(t,x+h n_\Sigma (t,x)\big)- \psi\big(t,x-h n_\Sigma(t,x)\big)\Big)
\end{equation}
for $t\in J$, $x \in \Sigma(t)$.
Note also that we use "on $\Sigma$" to mean "for all $(t,x) \in {\rm gr}(\Sigma)$", where
\begin{equation}
{\rm gr}(\Sigma):=\{(t,x): x \in \Sigma(t), t \in J\} = \bigcup_{t \in J} \Big( \{t\} \times \Sigma(t) \Big)
\end{equation}
denotes the graph of the (multi-valued) map $\Sigma: J \subset \R \to 2^{\R^n}\setminus\{\emptyset\}$.
\section{Moving hypersurfaces and consistent velocity fields}
Motivated by the physical background, we employ the following
definition of a $\mathcal{C}^{1,2}$-family of moving hypersurfaces which can also be found in \cite{Kimura.2008}, \cite{PrSi15} and in a similar form in \cite{Giga.2006}.
Let us note that ${\rm div}_\Sigma \, \mathcal{S}^\Sigma$ in \eqref{E4} contains the term
$\kappa_\Sigma = {\rm div}_\Sigma (- n_\Sigma )$, which is $n-1$ times the mean curvature of $\Sigma$.
This explains the requirement that all $\Sigma (t)$ are $\mathcal{C}^2$-hypersurfaces in $\R^n$.

\begin{defi}
Let $J=(a,b)\subset \R$ be an open interval. A family $\{\Sigma(t)\}_{t \in J}$ with $\Sigma(t) \subset \R^n$ is called a
$\mathcal{C}^{1,2}$-\emph{family of moving hypersurfaces} if
\begin{enumerate}
\item[(i)]
each $\Sigma(t)$ is an orientable $\mathcal{C}^2$-hypersurface in $\R^n$ with unit normal field denoted as $n_\Sigma(t,\cdot)$;
\item[(ii)]
the graph $\mathcal{M}$ of $\Sigma$ is a $\mathcal{C}^1$-hypersurface in $\R \times \R^n$;
\item[(iii)]
the unit normal field is continuously differentiable on $\mathcal{M}$, i.e.\
\[ n_\Sigma \in \mathcal{C}^1(\mathcal{M}). \]
\end{enumerate}
\end{defi}
\noindent
We also need the notion of consistent velocity fields $v^\Sigma:\mathcal{M} \to \R^n$.
\begin{defi}\label{def-consistency}
Let $J=(a,b)\subset \R$ and $\{\Sigma(t)\}_{t \in J}$ a
$\mathcal{C}^{1,2}$-family of moving hypersurfaces in $\R^n$  with graph $\mathcal{M}$.
Let $v^\Sigma: \mathcal{M} \to \R^n$ be a continuous velocity field such that the
$v^\Sigma (t,\cdot)$ are locally Lipschitz continuous on $\Sigma (t)$ for all $t\in J$.
We say that $v^\Sigma$ and $\mathcal{M}$ are \emph{consistent} (or that $v^\Sigma$ is consistent to $\mathcal{M}$),
if the initial value problems
\begin{equation}\label{E7}
\dot{x}^\Sigma(t)=v^\Sigma\big(t, x^\Sigma(t)\big) \text{ on } J, \quad x^\Sigma(t_0)=x_0
\end{equation}
have unique a.c.\ solutions on $J$ (locally in time, forward and backward) for every $(t_0,x_0)\in \mathcal{M}$.
\end{defi}
Note that $v^\Sigma$ is only given on $\mathcal{M}={\rm gr} (\Sigma)$ in
Definition~\ref{def-consistency} above. Hence solvability of \eqref{E7} on $I\subset J$ implicitly includes the constraint
\begin{equation}
x^\Sigma(t) \in \Sigma(t) \text{ on } I.
\end{equation}
To characterize consistency, we employ the so-called intermediate cone to $\mathcal{M}$ (cf.\ \cite{AubinFrankowska}),
defined for $(t,x)\in \mathcal{M}$ by
\begin{equation}\label{E9}
T_{\mathcal{M}}(t,x):=\big\{(\tau ,v): \lim\limits_{h \to 0+} h^{-1} \;{\rm dist}\, \big(x+h v, \Sigma(t+h \tau)\big)=0\big\}.
\end{equation}
Elements of $T_{\mathcal{M}}(t,x)$ are, in general, \emph{subtangential} to $\mathcal{M}$.
At inner points of $\mathcal{M}$ (in the sense of inner point of a surface),
the intermediate cone reduces to the set of tangential vectors.
Now, as a direct consequence of Corollary~5.3 in \cite{MDE} or Theorem~13.2.1 in \cite{PW-ODE}
(cf.\ also \cite{Bo2} and the appendix in \cite{BPS-surfactant}), the following holds.
\begin{lem}\label{lemma-consistency}
Let $J=(a,b)\subset \R$ and $\{ \Sigma (t) \}_{t\in J}$ be a $\mathcal{C}^{1,2}$-family of moving hypersurfaces in $\R^n$ with graph
$\mathcal{M}$. Let $v^\Sigma: \mathcal{M} \to \R^n$ be a continuous velocity field
such that the $v^\Sigma (t,\cdot)$ are locally Lipschitz continuous on $\Sigma (t)$ for all $t\in J$.%
Then $v^\Sigma$ is consistent to $\mathcal{M}$ iff (if and only if) $v^\Sigma$ is tangential to $\mathcal{M}$ in the sense that
\begin{equation}\label{E8}
\big(1,v^\Sigma(t,x)\big) \in T_{\mathcal{M}} (t,x) \quad \mbox{ on }\mathcal{M}.
\end{equation}
\end{lem}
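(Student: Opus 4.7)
The plan is to reformulate consistency as a viability/invariance problem for the augmented autonomous ODE in $\R^{1+n}$,
\[
\dot{\tau}(s) = 1, \qquad \dot{\xi}(s) = v^\Sigma\bigl(\tau(s), \xi(s)\bigr),
\]
with state constraint $(\tau(s), \xi(s)) \in \mathcal{M}$. Via the normalization $\tau(s) = s + t_0$, a.c.\ solutions of \eqref{E7} remaining on $\Sigma(\cdot)$ correspond exactly to a.c.\ integral curves of the field $(1, v^\Sigma)$ lying on $\mathcal{M}$, and condition \eqref{E8} is precisely the subtangency of $(1, v^\Sigma)$ to $\mathcal{M}$ in the sense of intermediate cones.

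For the necessity direction, I would fix $(t_0, x_0) \in \mathcal{M}$ and let $x^\Sigma$ denote the unique a.c.\ solution of \eqref{E7} supplied by consistency. The continuity of $v^\Sigma$ together with absolute continuity of $x^\Sigma$ yields classical differentiability of $x^\Sigma$ at $t_0$ with $\dot{x}^\Sigma(t_0) = v^\Sigma(t_0, x_0)$, so $x^\Sigma(t_0 + h) = x_0 + h v^\Sigma(t_0, x_0) + o(h)$. Since $x^\Sigma(t_0 + h) \in \Sigma(t_0 + h)$, I obtain
\[
{\rm dist}\bigl(x_0 + h v^\Sigma(t_0, x_0),\, \Sigma(t_0 + h)\bigr) \le \|x_0 + h v^\Sigma(t_0, x_0) - x^\Sigma(t_0 + h)\| = o(h),
\]
which is exactly $(1, v^\Sigma(t_0, x_0)) \in T_{\mathcal{M}}(t_0, x_0)$ per definition \eqref{E9}.

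For the sufficiency direction, I would invoke the viability/invariance theorem of \cite{MDE} (Cor.~5.3) or \cite{PW-ODE} (Thm.~13.2.1) for the augmented constrained ODE. Its hypotheses are readily verified: $\mathcal{M}$ is locally closed in $\R^{1+n}$ because it is a $\mathcal{C}^1$-hypersurface, the vector field $(1, v^\Sigma)$ is continuous on $\mathcal{M}$ by assumption, and the subtangency \eqref{E8} holds. This delivers local existence of a.c.\ solutions of \eqref{E7} respecting the constraint $x^\Sigma(t) \in \Sigma(t)$.

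The main obstacle is uniqueness (forward \emph{and} backward), since the Lipschitz hypothesis on $v^\Sigma(t,\cdot)$ is formulated slice-by-slice on $\Sigma(t)$ rather than jointly on $\mathcal{M}$, ruling out a direct Gronwall argument in the ambient space. I would resolve this by exploiting the local $\mathcal{C}^1$-structure of $\mathcal{M}$: the tangent space at $(t_0, x_0)$ contains the vector $(1, v^\Sigma(t_0, x_0))$ with non-vanishing time component, so by the implicit function theorem $\mathcal{M}$ admits locally a $\mathcal{C}^1$-parametrization $(t, z) \mapsto (t, \varphi(t, z))$ with $z \in U \subset \R^{n-1}$ and $\varphi \in \mathcal{C}^1$. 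In these coordinates the constrained ODE reduces to $\dot z = g(t, z)$ with $g$ continuous in $(t,z)$ and locally Lipschitz in $z$, inherited from the slice-wise Lipschitz continuity of $v^\Sigma$ together with the $\mathcal{C}^1$-regularity of $\varphi$; the classical Picard--Lindel\"of theorem then yields both-sided uniqueness, which transfers back to $\mathcal{M}$.
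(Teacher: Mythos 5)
Your proposal is correct and follows essentially the same route as the paper, which offers no written proof at all but simply declares the lemma ``a direct consequence of Corollary~5.3 in \cite{MDE} or Theorem~13.2.1 in \cite{PW-ODE}'' --- i.e.\ exactly the viability/invariance theorems you invoke for sufficiency, while your necessity argument (the $\mathcal{C}^1$ expansion $x^\Sigma(t_0+h)=x_0+h\,v^\Sigma(t_0,x_0)+o(h)$ bounding the distance to $\Sigma(t_0+h)$) and your uniqueness argument (local graph parametrization of $\mathcal{M}$ reducing to a Picard--Lindel\"of problem in the chart) are the standard details those citations are meant to cover. The only point worth keeping in mind is that passing from the slice-wise Lipschitz hypothesis on $v^\Sigma(t,\cdot)$ to a locally \emph{$t$-uniform} Lipschitz bound for the reduced field $g$ is needed for Gronwall, a uniformity the lemma's hypotheses (and hence the paper's own citation-based argument) already implicitly presuppose.
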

\noindent
For a $\mathcal{C}^{1,2}$-family $\{\Sigma(t)\}_{t \in J}$ of moving hypersurfaces, $V_\Sigma$ denotes
the \emph{speed of normal displacement} of $\Sigma(\cdot)$ and is defined via the relation
\begin{equation}\label{VSigma}
\lim_{h\to 0+} \frac 1 h {\rm dist} (x+h V_\Sigma (t,x) n_\Sigma (t,x) , \Sigma (t+h)) =0 \quad \mbox{ for } t\in J, x\in \Sigma(t).
\end{equation}
More precisely, $V_\Sigma$ should be named ''speed of normal forward displacement'' due to ''$h\to 0+$'' in \eqref{VSigma}.
But in all cases considered in the present paper, the speed of normal displacement will be the same in forward and in backward direction.
Let us note in passing that the definition via \eqref{VSigma} is equivalent to the common one which employs curves.
Indeed,
\[
V_\Sigma (t,x) = \langle \gamma' (t) , n_\Sigma (t, \gamma (t)) \rangle
\]
for any $\mathcal{C}^1$-curve $\gamma$ with $\gamma (t)=x$ and ${\rm gr}(\gamma)\subset \mathcal M$,
and the value does not depend on the choice of a particular curve; cf.\ Chapter~2.5 in \cite{PrSi15}.
In the literature, $V_\Sigma$ is often called normal velocity of $\Sigma (\cdot)$, but we prefer to call it the speed
of normal displacement since $V_\Sigma$ is not a velocity field.
The definition via \eqref{VSigma} clearly shows that $V_\Sigma$ is a purely kinematic quantity, determined only by the family $\{\Sigma(t)\}_{t \in J}$ of moving interfaces. Its computation is especially simple
if $\{\Sigma(t)\}_{t \in J}$ is given by a level set description, i.e.\
\begin{equation}\label{levelset}
\Sigma(t)=\{x \in \R^n: \phi(t,x)=0\}
\end{equation}
with $\phi \in \mathcal{C}^{1,2}(\mathcal N)$ for some open neighborhood $\mathcal N \subset \R \times \R^n$ of $\mathcal M$
such that $\nabla \phi \not= 0$ on $\mathcal{M}$. Then
\begin{equation}\label{E11}
V_\Sigma(t,x)=- \, \frac{\partial_t \phi(t,x)}{\|\nabla \phi(t,x)\|}\quad\text{ for } t \in J, \, x \in \Sigma(t).
\end{equation}
With this notation, the following characterization of consistency holds.
\begin{lem}\label{lemma-consistency2}
Let $J=(a,b)\subset \R$ and $\{ \Sigma (t) \}_{t\in J}$ be a $\mathcal{C}^{1,2}$-family of moving hypersurfaces in $\R^n$ with graph
$\mathcal{M}$. Let $v^\Sigma: \mathcal{M} \to \R^n$ be a continuous  velocity field
such that the $v^\Sigma (t,\cdot)$ are locally Lipschitz continuous on $\Sigma (t)$ for all $t\in J$.
Then $v^\Sigma$ is consistent to $\mathcal{M}$ iff
\begin{equation}\label{E9a}
v^\Sigma (t,x) \cdot n_\Sigma(t,x)=V_\Sigma(t,x) \text{ on }\mathcal{M}.
\end{equation}
\end{lem}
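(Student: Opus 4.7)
The plan is to use Lemma~\ref{lemma-consistency} to reduce the claim to a purely geometric statement about the intermediate cone $T_\mathcal{M}(t,x)$, and then to identify this cone explicitly. Since $\mathcal{M}$ is a $C^1$-hypersurface in $\R \times \R^n$, the intermediate cone $T_\mathcal{M}(t,x)$ coincides with the classical tangent hyperplane $T_{(t,x)}\mathcal{M}$ at each point, an $n$-dimensional linear subspace of $\R^{n+1}$. So the task becomes showing that $(1, v^\Sigma(t,x)) \in T_{(t,x)}\mathcal{M}$ is equivalent to $v^\Sigma(t,x) \cdot n_\Sigma(t,x) = V_\Sigma(t,x)$.

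To pin down $T_{(t,x)}\mathcal{M}$, I would exhibit $n$ linearly independent tangent vectors. First, given any $w \in T_x\Sigma(t)$ (the $(n-1)$-dimensional tangent space of the $C^2$-hypersurface $\Sigma(t)$ at $x$), there is a $C^1$-curve $s \mapsto \gamma(s)$ in $\Sigma(t)$ with $\gamma(0)=x$ and $\gamma'(0)=w$; then $s \mapsto (t,\gamma(s))$ is a $C^1$-curve in $\mathcal{M}$ with tangent $(0,w)$, so $\{0\} \times T_x\Sigma(t) \subset T_{(t,x)}\mathcal{M}$. Second, the kinematic definition \eqref{VSigma} of $V_\Sigma$ says exactly that $(1, V_\Sigma(t,x)\, n_\Sigma(t,x))$ belongs to $T_\mathcal{M}(t,x)=T_{(t,x)}\mathcal{M}$. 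Since its time component equals $1$, this vector is linearly independent of $\{0\} \times T_x\Sigma(t)$, so together they span an $n$-dimensional subspace, which by dimension count equals $T_{(t,x)}\mathcal{M}$.

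From here the conclusion is algebraic: $(1, v^\Sigma(t,x))$ lies in $T_{(t,x)}\mathcal{M}$ iff it can be written as $\alpha(1, V_\Sigma n_\Sigma) + (0, w)$ for some $\alpha \in \R$ and $w \in T_x\Sigma(t)$. Matching time components forces $\alpha=1$, whence $v^\Sigma(t,x) - V_\Sigma(t,x)\, n_\Sigma(t,x) = w \in T_x\Sigma(t)$, i.e.\ orthogonal to $n_\Sigma(t,x)$. Taking the inner product with $n_\Sigma(t,x)$ then yields $v^\Sigma(t,x) \cdot n_\Sigma(t,x) = V_\Sigma(t,x)$; the converse is the decomposition $(1, v^\Sigma) = (1, V_\Sigma n_\Sigma) + (0, v^\Sigma - V_\Sigma n_\Sigma)$, in which both summands are already known to lie in $T_{(t,x)}\mathcal{M}$. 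Lemma~\ref{lemma-consistency} then translates the geometric condition back into consistency.

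The main technical point is the identification $T_\mathcal{M}(t,x) = T_{(t,x)}\mathcal{M}$. For a $C^1$-submanifold this is classical (see, e.g., Chapter~4 in \cite{AubinFrankowska}), but an alternative route that sidesteps invoking it is to pass to a local $C^{1,2}$-level-set representation $\phi(t,x)=0$ of $\mathcal{M}$ (which exists by the $C^1$-regularity of $\mathcal{M}$ together with property~(iii) of the definition) and to apply first-order Taylor expansion to obtain
\[
\phi(t+h, x+hv) = h\bigl(\partial_t\phi(t,x) + \nabla_x\phi(t,x)\cdot v\bigr) + o(h),
\]
from which one reads off that $h^{-1}{\rm dist}\,(x+hv,\Sigma(t+h))\to 0$ iff $\partial_t\phi(t,x) + \nabla_x\phi(t,x)\cdot v = 0$. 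Using $n_\Sigma = \nabla_x\phi/\|\nabla_x\phi\|$ and formula \eqref{E11} for $V_\Sigma$, this is precisely $v\cdot n_\Sigma = V_\Sigma$, giving the same conclusion with everything made explicit.
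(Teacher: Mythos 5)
Your proof is correct, but it takes a genuinely different route from the paper's. Both arguments funnel through Lemma~\ref{lemma-consistency}, reducing consistency to the subtangentiality condition \eqref{E8}; the difference lies in how the pointwise equivalence of \eqref{E8} with \eqref{E9a} is established. The paper argues dynamically: it exploits the invariance of $\mathcal{M}$ under the flow of $v^\Sigma$ (already available from Lemma~\ref{lemma-consistency}) and compares, for small $h_k$, the solution started at a tangentially shifted point $x_k = x_0 - h_k v^\Sigma_{||}(t_0,x_0)+h_k z_k \in \Sigma(t_0)$ with the straight-line displacement $x_0 + h_k v^\Sigma_n(t_0,x_0)$, deducing \eqref{VSigma} for $v^\Sigma_n$ from a chain of distance estimates, and then exchanges the roles of $v^\Sigma$ and $V_\Sigma n_\Sigma$ for the converse. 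You instead compute the cone \eqref{E9} outright: it contains $\{0\}\times T_x\Sigma(t)$ and $(1,V_\Sigma n_\Sigma)$, hence --- being the $n$-dimensional tangent hyperplane of the $\mathcal{C}^1$-hypersurface $\mathcal{M}$ at an inner point --- equals their span, after which the equivalence is one line of linear algebra. This is shorter and more transparent, and it makes visible that the equivalence of \eqref{E8} and \eqref{E9a} is purely pointwise, consuming no Lipschitz hypothesis on $v^\Sigma$ (that hypothesis is needed only to invoke Lemma~\ref{lemma-consistency}). The one point requiring care is the identification of \eqref{E9} with the classical tangent space: \eqref{E9} measures the distance of $x+hv$ to the time slice $\Sigma(t+h\tau)$, not the distance of $(t,x)+h(\tau,v)$ to $\mathcal{M}\subset\R\times\R^n$, so the standard fact for $\mathcal{C}^1$-submanifolds does not apply verbatim; the paper asserts the identification without proof. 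Your second, level-set route via the signed distance of Lemma~\ref{signed-distance} --- for which ${\rm dist}(y,\Sigma(s))=|d_\Sigma(s,y)|$ locally, so the first-order expansion converts the limit in \eqref{E9} exactly into $\partial_t d_\Sigma + \nabla_x d_\Sigma\cdot v=0$, i.e.\ $v\cdot n_\Sigma = V_\Sigma$ by \eqref{E11} and \eqref{grad-dS} --- closes precisely this gap and is the cleanest way to make the argument self-contained.
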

\begin{proof}
We first show that \eqref{E8} implies \eqref{E9a}.
Fix $(t_0,x_0)\in \mathcal{M}$ and let $(h_k)\subset \R$ with $0\neq h_k\to 0$ be given.
Then there are $z_k\in \R^n$ with $z_k \to 0$ such that
\[
x_k := x_0 - h_k v^\Sigma_{||} (t_0,x_0) + h_k z_k \in \Sigma (t_0),
\]
since $v^\Sigma_{||} (t_0,x_0)$ is tangent to $\Sigma (t_0)$ in $x_0$.
By \eqref{E8} and Lemma~\ref{lemma-consistency}, the solutions of \eqref{E7} starting in $x_k$ stay in $\mathcal{M}$, i.e.\
\[
x^\Sigma (t_0 + h_k ;t_0, x_k) \in \Sigma (t_0 + h_k) \quad \mbox{ for all } k \geq 1.
\]
Hence, with $v^\Sigma_n := \langle v^\Sigma , n_\Sigma \rangle n_\Sigma$, we obtain
\begin{align*}
{\rm dist}(x_0 + h_k v^\Sigma_n (t_0,x_0), \Sigma (t_0 + h_k) ) \leq \\[1ex]
||x_0 + h_k v^\Sigma_n (t_0,x_0) - x^\Sigma (t_0 + h_k ;t_0, x_k)||  \leq\\[1ex]
||x_0 + h_k v^\Sigma_n (t_0,x_0) - (x_k + h_k v^\Sigma (t_0, x_k)) || + h_k \delta_k
\end{align*}
with some $\delta_k \to 0+$. Therefore,
\begin{align*}
\frac{1}{h_k} {\rm dist}(x_0 + h_k v^\Sigma_n (t_0,x_0), \Sigma (t_0 + h_k) ) \leq \\[1ex]
||v^\Sigma_n (t_0,x_0) + v^\Sigma_{||} (t_0,x_0) - v^\Sigma (t_0, x_k) - z_k || + \delta_k \to 0
\quad \mbox{ as } k\to \infty.
\end{align*}
This shows that \eqref{E9a} holds at the arbitrarily chosen $(t_0,x_0)\in \mathcal{M}$.

Now we assume \eqref{E9a} to hold. Since $V_\Sigma n_\Sigma$ satisfies \eqref{VSigma}, the velocity field
$v^\Sigma_n := \langle v^\Sigma , n_\Sigma \rangle n_\Sigma$ is consistent to $\mathcal{M}$ due to Lemma~\ref{lemma-consistency}.
Hence, with obvious modifications, we can exchange the role of $v^\Sigma$ and $v_n^\Sigma$ in
the arguments from above to see that
\begin{align*}
\frac{1}{h_k} {\rm dist}(x_0 + h_k v^\Sigma (t_0,x_0), \Sigma (t_0 + h_k) )  \to 0
\quad \mbox{ as } k\to \infty,
\end{align*}
hence $(1,v^\Sigma (t_0,x_0))\in T_{\mathcal{M}} (t_0,x_0)$.
\end{proof}
The following result is a slight extension of Lemma~12 in \cite{FKB-2019} and provides the existence of a local level set representation of $\mathcal{M}={\rm gr} ( \Sigma )$ via a signed distance function.
\begin{lem}\label{signed-distance}
Let $J=(a,b)\subset \R$, $\{\Sigma(t)\}_{t \in J}$ be a $\mathcal{C}^{1,2}$-family of moving hypersurfaces in $\R^n$ and $(t_0,x_0)$ be an inner point of $\mathcal{M}={\rm gr} ( \Sigma )$.
Then there exists an open neighborhood $U \subset \R^{n+1}$ of $(t_0,x_0)$ and $\epsilon > 0$ such that the map
\begin{align*}
X: (\mathcal{M} \cap U) \times (-\epsilon, \epsilon) \rightarrow \R^{n+1},\quad
X(t,x,h) := (t, x + h \, n_\Sigma(t,x))\nonumber
\end{align*}
is a diffeomorphism onto its image
\begin{align*}
\mathcal{N}^\epsilon := X((\mathcal{M} \cap U) \times (-\epsilon , \epsilon)) \subset \R^{n+1} ,
\end{align*}
i.e.\ $X$ is invertible there and both $X$ and $X^{-1}$ are $\mathcal{C}^1$. The inverse function has the form
\begin{equation}\label{Xinverse}
X^{-1}(t,x) = (\pi_\Sigma (t,x),d_\Sigma (t,x))
\end{equation}
with $\mathcal{C}^1$-functions $\pi_\Sigma$ and $d_\Sigma$ on $\mathcal{N}^\epsilon$.
Moreover, $\nabla_x d_\Sigma  \in  \mathcal{C}^1 (\mathcal{N}^\epsilon; \R^n)$ and $\nabla_x d_\Sigma \neq 0$.
\end{lem}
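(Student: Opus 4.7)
The plan is to reduce to the classical inverse function theorem applied to a Euclidean representation of $X$. Since $(t_0,x_0)$ is an inner point of $\mathcal{M}$ and $\mathcal{M}$ is a $\mathcal{C}^1$-hypersurface in $\R^{n+1}$ by property (ii), I can find an open set $V\subset \R^n$, a point $y_0 \in V$ and a $\mathcal{C}^1$-chart $\phi:V\to \mathcal{M}\cap U_0$ (with $U_0 \ni (t_0,x_0)$ open) such that $\phi(y_0)=(t_0,x_0)$. Writing $\phi(y)=(\phi_0(y),\phi_1(y))$ and invoking $n_\Sigma \in \mathcal{C}^1(\mathcal{M})$ from property (iii), I would introduce
\[
\tilde X(y,h):=X(\phi(y),h)=\big(\phi_0(y),\,\phi_1(y)+h\,n_\Sigma(\phi(y))\big)
\]
on $V\times(-\epsilon_0,\epsilon_0)$ and show that it is a local $\mathcal{C}^1$-diffeomorphism at $(y_0,0)$.

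\textbf{Invertibility of the Jacobian.} The crucial step is to verify that the $n+1$ columns of $D\tilde X(y_0,0)$ are linearly independent; since the first $n$ columns are $\partial_{y_i}\phi(y_0)$, spanning $T_{(t_0,x_0)}\mathcal{M}$, and the last is $(0,n_\Sigma(t_0,x_0))$, this amounts to showing $(0,n_\Sigma(t_0,x_0))\notin T_{(t_0,x_0)}\mathcal{M}$. I expect this to be the main obstacle, and would argue as follows. First, curves of the form $s\mapsto (t_0,c(s))$ with $c(s)\in\Sigma(t_0)$ yield the inclusion $\{0\}\times T_{x_0}\Sigma(t_0) \subset T_{(t_0,x_0)}\mathcal{M}$, contributing $n-1$ dimensions. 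Second, the definition \eqref{VSigma} of $V_\Sigma$ directly yields $(1,V_\Sigma(t_0,x_0)\,n_\Sigma(t_0,x_0))\in T_{\mathcal{M}}(t_0,x_0)$ in the intermediate-cone sense, and at the inner point $(t_0,x_0)$ this cone coincides with the tangent space. Because $T_{(t_0,x_0)}\mathcal{M}$ is $n$-dimensional, these two pieces already span it:
\[
T_{(t_0,x_0)}\mathcal{M} = \big(\{0\}\times T_{x_0}\Sigma(t_0)\big) + \R\cdot (1,V_\Sigma n_\Sigma).
\]
If $(0,n_\Sigma(t_0,x_0))$ were in this space, writing it as $(0,w)+\alpha(1,V_\Sigma n_\Sigma)$ would force $\alpha=0$ in the $t$-component and then $w=n_\Sigma$, contradicting $w\in T_{x_0}\Sigma(t_0) \perp n_\Sigma$.

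\textbf{Conclusion and the gradient of $d_\Sigma$.} The inverse function theorem then provides a neighborhood of $(y_0,0)$ on which $\tilde X$ is a $\mathcal{C}^1$-diffeomorphism; pulling back through $\phi$ and shrinking $U$ and $\epsilon$, $X$ becomes a $\mathcal{C}^1$-diffeomorphism from $(\mathcal{M}\cap U)\times (-\epsilon,\epsilon)$ onto its open image $\mathcal{N}^\epsilon$. Since the first coordinate of $X$ is the identity in $t$, the inverse has the stated form $X^{-1}(t,x)=(\pi_\Sigma(t,x),d_\Sigma(t,x))$ with $\pi_\Sigma(t,x)\in\mathcal{M}$ of the form $(t,\cdot)$ with spatial part in $\Sigma(t)$, and with $\pi_\Sigma, d_\Sigma$ of class $\mathcal{C}^1$. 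For the final assertion, fixing $t$ and noting that $x\mapsto d_\Sigma(t,x)$ is the classical signed distance to the $\mathcal{C}^2$-hypersurface $\Sigma(t)$ in its tubular neighborhood, the well-known identity $\nabla_x d_\Sigma(t,x)=n_\Sigma(t,\pi_\Sigma(t,x))$ applies. Composing the $\mathcal{C}^1$-map $(t,x)\mapsto (t,\pi_\Sigma(t,x))$ with values in $\mathcal{M}$ with $n_\Sigma \in \mathcal{C}^1(\mathcal{M})$ shows $\nabla_x d_\Sigma\in \mathcal{C}^1(\mathcal{N}^\epsilon;\R^n)$, and $\|\nabla_x d_\Sigma\|=1$ gives $\nabla_x d_\Sigma \neq 0$.
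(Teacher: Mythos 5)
Your route is genuinely different from the paper's: the paper does not reprove the tubular‐neighbourhood statement at all, but imports it from Lemma~12 in \cite{FKB-2019} and only supplies the last assertion, namely the identity $\nabla_x d_\Sigma = n_\Sigma(t,\pi_\Sigma(t,x))$ obtained by differentiating $d_\Sigma(t,x)=\langle x-\pi_\Sigma(t,x),\, n_\Sigma(t,\pi_\Sigma(t,x))\rangle$ at fixed $t$; your final paragraph is essentially that argument and is fine. Your self-contained inverse-function-theorem argument is the natural way to prove the full statement, and everything in it is routine except the one step you correctly single out as the crux: showing $(0,n_\Sigma(t_0,x_0))\notin T_{(t_0,x_0)}\mathcal{M}$.

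That step is where there is a genuine gap. You justify it by asserting that $(1,V_\Sigma n_\Sigma)$ lies in the tangent space ``directly by the definition \eqref{VSigma}''. But \eqref{VSigma} only defines $V_\Sigma(t_0,x_0)$ \emph{if} a real number with that property exists, and in the paper's logical order the well-definedness of $V_\Sigma$ is exactly the content of Corollary~\ref{cor-natvelo}, whose proof rests on Lemma~\ref{signed-distance}; as written, your argument is circular. Worse, the missing fact cannot be recovered from conditions (i)--(iii) of the definition of a $\mathcal{C}^{1,2}$-family alone. Take $n=2$, $J=(-1,1)$ and $\Sigma(t)=\{x\in\R^2: x_2=t^{1/3}\}$: each $\Sigma(t)$ is a $\mathcal{C}^\infty$ line, $\mathcal{M}=\{t=x_2^3\}$ is a $\mathcal{C}^\infty$ hypersurface in $\R^3$, and $n_\Sigma\equiv(0,1)$ is constant, yet at every point $(0,x_1,0)\in\mathcal{M}$ one has $T\mathcal{M}=\{0\}\times\R^2\ni(0,n_\Sigma)$, the limit in \eqref{VSigma} exists for no finite $V_\Sigma$, the differential of $X$ is singular, and $d_\Sigma(t,x)=x_2-t^{1/3}$ fails to be $\mathcal{C}^1$. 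So the lemma genuinely needs the nondegeneracy condition $(0,n_\Sigma)\notin T\mathcal{M}$ (equivalently: the time direction $(1,0)$ is nowhere normal to $\mathcal{M}$; equivalently: $V_\Sigma$ is finite), which is built into the definitions used in the cited references but must be either assumed or proved --- your proof silently assumes it at the decisive point. (In fairness, the paper is equally loose here: it already uses $V_\Sigma$ in Lemma~2 before establishing its existence; but a self-contained proof should make the needed hypothesis explicit rather than derive it from \eqref{VSigma}.)
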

\begin{proof}
The only point not covered by the proof to Lemma~12 in \cite{FKB-2019} is the additional regularity of $\nabla_x d_\Sigma $,
which follows by an argument taken from \cite{PrSi15}, where it is used for a fixed hypersurface:
given a fixed $t\in J$, we have
\[
x=\pi_\Sigma (t,x) + d_\Sigma(t,x) \, n_\Sigma (t, \pi_\Sigma (t, x)) \quad \mbox{ on } \Sigma (t),
\]
hence
\[
d_\Sigma(t,x) = \langle x - \pi_\Sigma (t, x), n_\Sigma (t, \pi_\Sigma (t, x)) \rangle
\]
by taking inner products with $n_\Sigma (t, \pi_\Sigma (t, x))$.
Differentiation as in the time-independent case (see \cite{PrSi15}) yields
\begin{equation}\label{grad-dS}
\nabla_x d_\Sigma =n_\Sigma (t, \pi_\Sigma (t, x)),
\end{equation}
hence the desired regularity of $\nabla_x d_\Sigma$ as well as $||\nabla_x d_\Sigma|| \equiv 1 \neq 0$.
\end{proof}
The latter result is useful to show that any $\mathcal{C}^{1,2}$-family of moving hypersurfaces has an intrinsic consistent
velocity field, allowing for unique solutions.
\begin{cor}\label{cor-natvelo}
Let $J=(a,b)\subset \R$ and $\{\Sigma(t)\}_{t \in J}$ be a $\mathcal{C}^{1,2}$-family of moving hypersurfaces in $\R^n$ with graph $\mathcal M$.
Then its speed of normal displacement $V_\Sigma$
is well-defined with $V_\Sigma \in \mathcal{C} (\mathcal{M})$,
$\nabla_\Sigma V_\Sigma \in \mathcal{C} (\mathcal{M};\R^n)$.
Furthermore, the intrinsic velocity field
\begin{equation}\label{natinterfacevelo}
w^\Sigma (t,x) := V_\Sigma (t,x) \, n_\Sigma (t,x) \quad \mbox{ for } (t,x) \in \mathcal M
\end{equation}
satisfies $w^\Sigma \in \mathcal{C} (\mathcal{M};\R^n)$, $\nabla_\Sigma w^\Sigma \in \mathcal{C} (\mathcal{M};\R^{n\times n})$
and is consistent to $\mathcal M$.
\end{cor}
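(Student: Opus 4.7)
The plan is to reduce everything to the local signed-distance description supplied by Lemma~\ref{signed-distance}. Around any point of $\mathcal{M}$, that lemma yields a neighborhood $\mathcal{N}^\epsilon$ on which $d_\Sigma \in \mathcal{C}^1(\mathcal{N}^\epsilon)$, $\nabla_x d_\Sigma \in \mathcal{C}^1(\mathcal{N}^\epsilon; \R^n)$, and, by~\eqref{grad-dS}, $\|\nabla_x d_\Sigma\| \equiv 1$. Since $\Sigma(t)$ coincides locally with the zero level set of $d_\Sigma(t,\cdot)$, a short Taylor-expansion argument in the defining relation~\eqref{VSigma} shows that the limit therein exists and agrees with the level-set formula~\eqref{E11}, namely
\[
V_\Sigma(t,x) = -\partial_t d_\Sigma(t,x)
\quad \mbox{ on } \mathcal{M} \cap U.
\]
This simultaneously establishes well-definedness and continuity of $V_\Sigma$ on $\mathcal{M}$, independently of the chosen local representation.

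For $\nabla_\Sigma V_\Sigma$ the key step is to show that $\partial_t d_\Sigma$ is itself $\mathcal{C}^1$ in the spatial variables. Since $\nabla_x d_\Sigma \in \mathcal{C}^1(\mathcal{N}^\epsilon)$, the mixed derivative $\partial_t \nabla_x d_\Sigma$ is continuous on $\mathcal{N}^\epsilon$. A commutation argument for mixed partial derivatives, using only $d_\Sigma \in \mathcal{C}^1$ together with continuity of $\partial_t \nabla_x d_\Sigma$, then shows that $\nabla_x \partial_t d_\Sigma$ exists and coincides with $\partial_t \nabla_x d_\Sigma$. Consequently,
\[
\nabla_\Sigma V_\Sigma = (I - n_\Sigma \otimes n_\Sigma)\nabla_x V_\Sigma
\]
is continuous on $\mathcal{M}$, since $n_\Sigma$ is continuous by the definition of a $\mathcal{C}^{1,2}$-family of moving hypersurfaces.

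The statements about $w^\Sigma = V_\Sigma \, n_\Sigma$ then follow quickly. Continuity on $\mathcal{M}$ is immediate from continuity of each factor, and the product rule
\[
\nabla_\Sigma w^\Sigma = n_\Sigma \otimes \nabla_\Sigma V_\Sigma + V_\Sigma \, \nabla_\Sigma n_\Sigma
\]
delivers continuity of $\nabla_\Sigma w^\Sigma$ because $n_\Sigma \in \mathcal{C}^1(\mathcal{M})$ by hypothesis and $\nabla_\Sigma V_\Sigma$ is continuous by the previous paragraph. The existence of a continuous surface gradient entails local Lipschitz continuity of $w^\Sigma(t,\cdot)$ on each $\Sigma(t)$. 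Finally, $w^\Sigma \cdot n_\Sigma \equiv V_\Sigma$ holds by construction, so Lemma~\ref{lemma-consistency2} yields consistency of $w^\Sigma$ with $\mathcal{M}$.

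The main technical subtlety I anticipate lies in the mixed-partial commutation of the second paragraph: the hypotheses of Lemma~\ref{signed-distance} do not provide full $\mathcal{C}^2$-regularity of $d_\Sigma$, so one must derive $\nabla_x \partial_t d_\Sigma = \partial_t \nabla_x d_\Sigma$ via a careful fundamental-theorem-of-calculus argument rather than invoking Schwarz's theorem in its textbook form.
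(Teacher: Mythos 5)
Your proof is correct and follows essentially the same route as the paper: local reduction to the signed-distance representation of Lemma~\ref{signed-distance}, the identification $V_\Sigma=-\partial_t d_\Sigma$, the mixed-partial commutation (the paper invokes exactly the refined version of Schwarz's theorem you anticipate needing, citing Walter), the product rule for $\nabla_\Sigma w^\Sigma$, and consistency via the tangency/normal-speed characterization. The only cosmetic difference is that you close via Lemma~\ref{lemma-consistency2} while the paper applies Lemma~\ref{lemma-consistency} directly to $(1,w^\Sigma)\in T_{\mathcal M}$, which is immediate from the definition \eqref{VSigma}; these are interchangeable here.
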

\begin{proof}
Since only local properties are considered, it suffices to consider a fixed $(t_0,x_0)\in \mathcal M$ and arbitrarily small
neighborhoods (in $\mathcal M$) thereof. Locally, the $\mathcal{C}^{1,2}$-family $\{\Sigma(t)\}_{t \in J}$
of moving hypersurfaces is given as
\[
\Sigma (t) \cap B_\epsilon (x_0) = \{ x\in B_\epsilon (x_0): d_\Sigma (t,x)=0 \}
\]
with $d_\Sigma$ from \eqref{Xinverse} due to Lemma~\ref{signed-distance}.
Hence, by \eqref{E11} and \eqref{grad-dS}, the speed of normal displacement is given as
\[
V_\Sigma (t,x) = - \, \partial_t d_\Sigma(t,x)
\]
in a neighborhood of $(t_0,x_0)$ in $\mathcal M$. Evidently, $\partial_t d_\Sigma \in \mathcal{C} (\mathcal{M})$
by Lemma~\ref{signed-distance}, hence $V_\Sigma \in \mathcal{C} (\mathcal{M})$.
Since $n_\Sigma \in \mathcal{C}^1 (\mathcal{M})$ by assumption on $\{\Sigma(t)\}_{t \in J}$,
this also yields $w^\Sigma \in \mathcal{C} (\mathcal{M};\R^n)$.
To see the additional regularity, note that $\nabla_x d_\Sigma$ is ${\mathcal C}^1$ by Lemma~\ref{signed-distance},
hence the mixed second order
derivatives $\partial_t \partial_{x_k} d_\Sigma$ exist and are continuous. In this case, the order of differentiation can be exchanged
due to the Theorem of Schwarz\footnote{In the following refined version:
if $f:B_\epsilon (x_0)\subset \R^2\to \R$ is continuous with continuous first partial derivatives such that
$\partial_1 \partial_2 f(x)$ exists in $B_\epsilon (x_0)$ and is continuous in $x_0$, then $\partial_2 \partial_1 f(x_0)$
exists and $\partial_1 \partial_2 f(x_0)=\partial_2 \partial_1 f(x_0)$; see section~3.3 in \cite{Walter-Ana2} for a proof.}, thus $\nabla_x  \partial_t d_\Sigma$ exists and is continuous on $\mathcal{N}^\epsilon$.
Hence
\[
\nabla_\Sigma V_\Sigma = - P_\Sigma \nabla_x  \partial_t d_\Sigma \in \mathcal{C} (\mathcal{M};\R^n).
\]
Consequently,
\[
\nabla_\Sigma  w^\Sigma = n_\Sigma \otimes \nabla_\Sigma  V_\Sigma +  V_\Sigma \, \nabla_\Sigma n_\Sigma
\in \mathcal{C} (\mathcal{M};\R^{n\times n}).
\]
Finally, by definition of $V_\Sigma$, the intrinsic velocity field $w^\Sigma =V_\Sigma \, n_\Sigma$ satisfies
\[
(1, w^\Sigma (t,x)) \in T_{\mathcal M} (t,x) \quad \mbox{ on } \mathcal M.
\]
Hence $w^\Sigma$ is consistent to $\mathcal M$ due to Lemma~\ref{lemma-consistency};
note that the $w^\Sigma (t,\cdot)$ are locally Lipschitz continuous on $\Sigma (t)$ for $t\in J$.
\end{proof}
\section{Extension of consistent interface velocities}
The proof of wellposedness for the initial value problem \eqref{IVP} in the specific two-phase situation employs
a reduction to fixed $\Sigma_0$ instead of moving $\Sigma (t)$. This reduction is based on the flow map associated to \eqref{IVP}.
Recall that if the initial value problems \eqref{IVP} are wellposed, the associated \emph{flow map} (or, simply, \emph{flow})
is the map $\Phi_{t_0}^t:\R^n \to \R^n$, defined by
\begin{equation}\label{flowmap}
\Phi_{t_0}^t (x_0) := x(t;t_0,x_0),
\end{equation}
where $x(\cdot;t_0,x_0)$ is the unique solution of \eqref{IVP}. Of course, this concept can also
be defined locally if \eqref{IVP} only has local (in time) solutions. We call this the flow map associated with the right-hand side $f$.
Below, if the initial time $t_0$ is fixed, we denote the flow map as $\Phi^t$ for better readability.

Now if a $\mathcal{C}^{1,2}$-family of moving hypersurfaces in $\R^n$ is given, there is the intrinsic interface velocity field
$w^\Sigma$ given by \eqref{natinterfacevelo} and $w^\Sigma$ is consistent with the regularity as stated in Corollary~\ref{cor-natvelo}.
If $w$ denotes a continuous extension of $w^\Sigma$ from $\mathcal M:={\rm gr}(\Sigma)$ to some open neighborhood $U$ of $\mathcal M$,
being locally Lipschitz continuous in $x$, say, then the flow map $\Phi^t_{t_0}$ associated with $w$ can be used as a
nonlinear coordinate transform which fixes $\Sigma(t)$, since $\Sigma(t)=\Phi^t_{t_0} (\Sigma(t_0))$.
But this alone is not sufficient for our purpose, since a curve $\gamma (\cdot)$ which passes through $\Sigma (t_0)$
in normal direction, i.e.\ $\gamma (s_0) =: x_0 \in \Sigma (t_0)$ and (w.l.o.g.) $\gamma'(s_0) =n_{\Sigma(t_0)} ( x_0)$,
is mapped into a curve which, while crossing $\Sigma (t)$ in the point $x(t)=\Phi^t_{t_0} (x_0)$, does not pass through $\Sigma (t)$
in normal direction, in general. In other words, the coordinate transform mediated by the flow leaves the interface invariant, but
rotates the direction of vector fields, thus mixing tangential and normal parts.
To avoid this difficulty, we are going to construct a particular extension of a given consistent interface velocity field
which leads to a flow map $\Phi^t_{t_0}$ such that
\begin{equation}\label{normal-evolution}
n_{\Sigma (t)} (\Phi^t_{t_0} (y)) = \big[ D_y \Phi^t_{t_0}(y) \big]\, n_{\Sigma (t_0)} (y)
\quad \forall \, t_0 \in J, y\in \Sigma (t_0), t\in J_{t_0,y},
\end{equation}
where $J_{t_0,y}$ denotes the interval of existence of the solution to \eqref{E7} for initial value $(t_0,y)$.

A key step of this extension relies on the following
auxiliary result, where $V(r)=\omega_n |r|^n$ and $A(r)=n \omega_n |r|^{n-1}$ with $\omega_n$ the volume of
$B_1(0)\subset \R^n$.
\begin{prop}\label{prop-extension1}
Let $\Sigma$ be a $\mathcal{C}^2$-hypersurface in $\R^n$ without boundary with normal field $n$.
Due to Lemma~\ref{signed-distance}, there exists an open neighborhood $U \subset \R^n$ of $\Sigma$ such that
$\Sigma=\{ x\in U: d(x)=0\}$ with $d\in \mathcal{C}^2 (U)$ the signed distance to $\Sigma$.
Let $\pi\in \mathcal{C}^1 (U)$ denote the associated projection\footnote{Actually, $\pi$ is the metric projection onto $\Sigma$,
i.e.\  $\pi (x) \in \Sigma$ with $||x-\pi (x) ||=d(x)$.}, i.e.\ $x=\pi(x)+d(x)\, n(x)$.
Given $f^\Sigma \in \mathcal{C}^1 (\Sigma)$ and $g\in \mathcal{C} (U)$, let $\tilde U =\{ x\in U: B_{|d(x)|}(x)\subset U \}$
which is an open neighborhood of $\Sigma$.
Define $f:\tilde U\to \R$ via
\begin{equation}\label{extension1}
f(x)= f^\Sigma (\pi (x)) - \frac{d(x)}{V(d(x))} \int_{||x-y||\leq |d(x)|} g(y)\, dy \quad \mbox{ for } x\in \tilde U.
\end{equation}
Then $f$ satisfies
\begin{align}
\partial_k f (x) & = \partial_k (f^\Sigma \circ \pi )(x)\label{partialkf}
+ \partial_k d(x) \frac{n-1}{V(d(x))} \int_{||x-y||\leq |d(x)|} g(y)\, dy\\[1ex]
 & - \partial_k d(x) \frac{n}{A(d(x))} \int_{||x-y||= |d(x)|} g(y)\, do(y)\nonumber \\[1ex]
 & + \frac{n}{A(d(x))} \int_{||x-y||= |d(x)|} g(y) \frac{x_k -y_k}{d(x)}\, do(y)\nonumber
\end{align}
for all $x\in \tilde U\setminus \Sigma$, i.e.\ all $x\in \tilde U$ with $d(x)\neq 0$.
Furthermore,
\begin{equation}\label{nablafonS}
\nabla f(x) = \nabla_\Sigma f^\Sigma (x) - g(x)\, n_\Sigma (x) \quad \mbox{ for } x\in \Sigma.
\end{equation}
Finally, it holds that $f\in \mathcal{C}^1 (\tilde U)$.
\end{prop}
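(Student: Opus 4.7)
Write $f = f_1 - h$ where $f_1(x) := f^\Sigma(\pi(x))$ and $h(x) := \frac{d(x)}{V(d(x))}\,\mathcal{I}(x, |d(x)|)$ with the auxiliary ball integral $\mathcal{I}(x,r) := \int_{\|x-y\|\leq r} g(y)\,dy$. Since $\pi \in \mathcal{C}^1(U)$ and $f^\Sigma \in \mathcal{C}^1(\Sigma)$, $f_1$ is $\mathcal{C}^1$ on $U$ by composition; the identity $D\pi(x) = P_\Sigma(x)$ on $\Sigma$ gives $\nabla f_1(x) = \nabla_\Sigma f^\Sigma(x)$ there. Everything nontrivial concerns $h$.

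On $\tilde U \setminus \Sigma$ I compute $\partial_k h$ via the chain rule. The scalar map $\phi(r) := r/V(r) = \sgn(r)/(\omega_n |r|^{n-1})$ satisfies $\phi'(r) = (1-n)/V(r)$ for $r \neq 0$, so $\partial_k[d/V(d)](x) = \tfrac{1-n}{V(d(x))}\partial_k d(x)$. For the ball integral, Fubini in polar coordinates yields $\partial_r \mathcal{I}(x,r) = \int_{\|x-y\|=r} g\,do$, while a Leibniz/moving-domain argument (for $g \in \mathcal{C}^1$ the divergence theorem gives $\int_{B_r(x)} \partial_k g\,dy = \int_{\partial B_r(x)} g\,\tfrac{y_k - x_k}{r}\,do$; this extends to merely continuous $g$ by mollification and passage to the limit) yields $\partial_{x_k} \mathcal{I}(x,r) = \int_{\|x-y\|=r} g(y)\,\tfrac{y_k - x_k}{r}\,do(y)$. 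Combining these with $\partial_k|d(x)| = \sgn(d(x))\,\partial_k d(x)$ and the identity $\tfrac{|d|}{V(d)} = \tfrac{n}{A(d)}$ produces formula \eqref{partialkf} after sign bookkeeping.

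For $x_0 \in \Sigma$ I establish \eqref{nablafonS} directly from the definition of $h$. By continuity of $g$, the solid average $\tfrac{1}{V(r)}\mathcal{I}(x,r) \to g(x_0)$ as $(x,r) \to (x_0, 0+)$, and $d(x_0 + se) = s\langle e, n_\Sigma(x_0)\rangle + o(s)$, so $h(x_0+se) = s\,\langle e, n_\Sigma(x_0)\rangle\, g(x_0) + o(s)$. Hence $\nabla h(x_0) = g(x_0)\, n_\Sigma(x_0)$, and \eqref{nablafonS} follows. The final $\mathcal{C}^1$ step is to verify that each term in \eqref{partialkf} has the correct limit as $x \to x_0 \in \Sigma$: the second tends to $(n-1)\,(n_\Sigma(x_0))_k\, g(x_0)$ and the third to $-n\,(n_\Sigma(x_0))_k\, g(x_0)$ by the solid resp.\ spherical average limits, while the fourth, after substituting $y = x + |d(x)|\xi$, $\xi \in S^{n-1}$, becomes $-\sgn(d(x))\,\omega_n^{-1}\int_{S^{n-1}} g(x + |d|\xi)\,\xi_k\,do(\xi)$ and tends to $0$ by the exact cancellation $\int_{S^{n-1}}\xi_k\, do = 0$ combined with uniform continuity of $g$. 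The three contributions sum to $-(n_\Sigma(x_0))_k\, g(x_0)$, matching the boundary value already computed.

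The main obstacle is this final $\mathcal{C}^1$ step, specifically controlling the fourth term of \eqref{partialkf} across $\Sigma$: the factor $1/d(x)$ diverges as $x \to \Sigma$, and must be tamed by the rotational symmetry $\int_{S^{n-1}}\xi_k\,do = 0$. Since only continuity — not differentiability — of $g$ is assumed, the argument must pull the constant $g(x_0)$ out of the spherical integral exactly (yielding zero) and bound the $g(x + |d|\xi) - g(x_0)$ remainder purely by the modulus of continuity of $g$, uniformly over all directions of approach to $\Sigma$.
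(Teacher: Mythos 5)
Your proposal is correct and follows essentially the same route as the paper: the same decomposition into $f^\Sigma\circ\pi$ minus a prefactor times a ball average, the same product/chain-rule computation off $\Sigma$, the same direct difference-quotient argument on $\Sigma$, and the same use of the cancellation $\int_{S^{n-1}}\xi_k\,do=0$ together with the modulus of continuity of $g$ to tame the $1/d(x)$ factor in the fourth term. The only (inessential) difference is that you differentiate the ball integral by separating the center and radius dependence and invoking the divergence theorem plus mollification, whereas the paper treats the jointly moving ball via Reynolds' transport theorem with a level-set description of its boundary; your sign bookkeeping and the resulting limits $(n-1)(n_\Sigma)_k g$, $-n(n_\Sigma)_k g$, $0$ all check out.
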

\begin{proof}
We consider only the case $x\in \tilde{U}^+ :=\{x\in \tilde{U}: d(x)>0 \}$, since this allows for better readability, avoiding
the use of $|d(x)|$ instead of $d(x)$; the other case can be treated by the same arguments with obvious modifications.
Evidently,
\begin{equation}\label{fG}
f(x)=(f^\Sigma \circ \pi) (x) - \frac{1}{\omega_n d(x)^{n-1}} G(x) \quad \mbox{ for } x\in \tilde{U}^+ \setminus \Sigma
\end{equation}
with
\[
G(x)=\int_{||x-y||\leq d(x)} g(y)\, dy.
\]
We have
\[
\partial_k G(x)= \frac{d}{ds} G(x+s\, e_k)_{|s=0} = \big( \frac{d}{ds} \int_{\Omega (s)} g(y)\, dy \Big)_{|s=0}
\]
and employ the Reynolds' transport theorem to compute $\partial_k G(x)$.
For this purpose note that $\Gamma (s):=\partial \Omega (s)$ has the level set representation
\[
\Gamma (s) =\{ y: \phi (s,y) = 0 \}
\quad \mbox{with } \;
\phi (s,y)=||x+s\, e_k -y||^2 - d(x+s\, e_k)^2.
\]
Using \eqref{E11}, a simple calculation shows that $\Gamma (\cdot )$ has normal speed of displacement $V_\Gamma$ given by
\[
V_\Gamma (s,y)=\frac{- \partial_s \phi(s,y)}{||\nabla_y \phi(s,y)||} =
\frac{d(x+s\, e_k)\frac{d}{ds}d(x+s\, e_k)-x_k +y_k -s}{||x-y+s\, e_k ||}.
\]
Hence
\[
\partial_k G(x)=\int_{\Gamma(0)} g(y) \frac{d(x)\partial_k d(x) +y_k -x_k}{||x-y||}\, do(y),
\]
and therefore
\begin{equation}\label{partial_kG(x)}
\partial_k G(x)=\partial_k d(x) \int_{||x-y||=d(x)} \!\! g(y) do(y)
- \! \int_{||x-y||=d(x)} \!\! g(y) \frac{x_k \!- \!y_k}{d(x)}\, do(y).
\end{equation}
Differentiating \eqref{fG}, using \eqref{partial_kG(x)}, yields \eqref{partialkf} for all $x\in \tilde{U}^+\setminus \Sigma$.

At $x\in \Sigma$ we have $f(x)=f^\Sigma (\pi (x))=f^\Sigma (x)$. Hence, for $s>0$,
\[
f(x+s\, n) = f(x) - \frac{s}{V(s)} \int_{||x+s\hspace{1pt} n-y||\leq s} g(y)\, dy
\]
with $n:=n_\Sigma (x)$. Thus,
\begin{align*}
|| & \frac{f(x+s\, n) - f(x)}{s} + g(x) || \leq \frac{1}{V(s)} \int_{||x+s\hspace{1pt} n-y||\leq s} ||g(x) - g(y)||\, dy\\
 & \leq \sup \{ ||g(x) - g(y)||: ||x+s\hspace{1pt} n-y||\leq s \} \to 0 \quad \mbox{as } s\to 0+.
\end{align*}
It is easy to check (replacing $s$ by $|s|$ at a few places) that the same conclusion holds for $s\to 0-$, hence
\begin{equation}\label{fnderivative}
\frac{\partial f}{\partial n} (x) = - g(x) \quad \mbox{at } x\in \Sigma.
\end{equation}
On $\Sigma$, we also have $\nabla_\Sigma f (x) =\nabla_\Sigma f^\Sigma (x)$ since $f=f^\Sigma$ there.
Together with \eqref{fnderivative}, this yields \eqref{nablafonS}.

To finish the proof, notice first that the $\partial_k f$ are continuous on $\tilde{U}^+\setminus \Sigma$.
Indeed, there are two types of averages involved in \eqref{partialkf}, namely volume averages
\[
x \to \frac{1}{V(d(x))} \int_{||x-y||\leq d(x)} h(y)\, dy
\]
and area averages
\[
x \to \frac{1}{A(d(x))} \int_{||x-y||= d(x)} h(y)\, do(y)
\]
with functions $h\in \mathcal{C}(U)$.
The continuity of these maps follows from continuity of $h$ and $d$ by the dominated convergence theorem,
if the integrals are rewritten via rescaling as
\[
x \to \frac{1}{\omega_n} \int_{||z||\leq 1} h(x+d(x) z)\, dz
\]
and
\[
x \to \frac{n}{\omega_n} \int_{||z||= 1} h(x+d(x) z)\, do(z).
\]
It remains to show that
\begin{equation}\label{convnablaf}
\nabla f(x) \to \nabla_\Sigma f^\Sigma (x_0) - g(x_0)\, n_\Sigma (x_0) \quad \mbox{ for }
\tilde{U}^+\setminus \Sigma \ni x\to x_0 \in \Sigma.
\end{equation}
For $x\in \tilde{U}^+\setminus \Sigma$, we have
\begin{equation}\label{c1}
|| \frac{1}{V(d(x))} \int_{||x-y||\leq d(x)} g(y)\, dy - g(x) || \leq \sup_{||x-y||\leq d(x)} ||g(x) - g(y)||,
\end{equation}
\begin{equation}\label{c2}
|| \frac{1}{A(d(x))} \int_{||x-y||= d(x)} g(y)\, do(y) - g(x) || \leq \sup_{||x-y||= d(x)} ||g(x) - g(y)||
\end{equation}
and
\begin{equation}\label{c3}
\int_{||x-y||= d(x)} \!\! g(y)\frac{x_k-y_k}{d(x)} do(y)
= \int_{||x-y||= d(x)} \!\!  \big( g(y)-g(x)\big) \frac{x_k \!-\! y_k}{d(x)} do(y).
\end{equation}
For the latter equality, note that
\[
\int_{||x-y||= d(x)} \frac{x_k-y_k}{d(x)}\, do(y) = \frac{1}{d(x)} \int_{||z||= d(x)} z_k \, do(z) = 0.
\]
Applying the relations \eqref{c1}, \eqref{c2} and \eqref{c3} to \eqref{partialkf} immediately yields
\eqref{convnablaf}, hence $f\in \mathcal{C}^1 (\tilde{U}^+)$. Together with the analogous treatment for $x\in \tilde{U}^-$
and because the limit on $\Sigma$ is the same for both sides, we obtain $f\in \mathcal{C}^1 (\tilde{U})$.
\end{proof}
Let us note in passing that, in vector notation, equation \eqref{partialkf} means
\begin{align}
\nabla f (x) & = \nabla (f^\Sigma \circ \pi )(x)
+ n_\Sigma(x) \frac{n-1}{V(d(x))} \int_{||x-y||\leq |d(x)|} g(y)\, dy\\[1ex]
 & - n_\Sigma (x) \frac{n}{A(d(x))} \int_{||x-y||= |d(x)|} g(y)\, do(y)\nonumber \\[1ex]
 & - \frac{n}{A(d(x))} \int_{||x-y||= |d(x)|} g(y) \nu (y)\, do(y) \quad \mbox{ for } x\in \tilde U \setminus \Sigma,\nonumber
\end{align}
where $\nu (\cdot)$ is the outer unit normal to the sphere $\partial B_{d(x)} (x)$.

Inspection of the above proof in the time-dependent case shows that
the following result is an immediate corollary to Proposition~\ref{prop-extension1}.
\begin{cor}\label{extension2}
Let $J=(a,b)\subset \R$ and $\{\Sigma(t)\}_{t \in J}$ be a $\mathcal{C}^{1,2}$-family of moving hypersurfaces without
boundary in $\R^n$ with graph $\mathcal M$. By Lemma~\ref{signed-distance},
there exists an open neighborhood $\mathcal N \subset \R^{n+1}$
of $\mathcal M$ such that $\{\Sigma(t)\}_{t \in J}$
has a level set representation with signed distance function $d_\Sigma$ such that $d_\Sigma \in \mathcal{C}^1 (\mathcal N)$
and $\nabla_x d_\Sigma \in \mathcal{C}^1 (\mathcal N;\R^n)$. Let $\pi_\Sigma \in \mathcal{C}^1 (\mathcal N)$ denote the
associated family of projections onto $\Sigma (\cdot)$ characterized by
\[
x=\pi_\Sigma (t,x) + d_\Sigma (t,x) \, n_\Sigma (t,x) \quad \mbox{ for all } (t,x)\in \mathcal N.
\]
Given $f^\Sigma \in \mathcal{C}(\mathcal M)$ with $\nabla_\Sigma f^\Sigma \in \mathcal{C}(\mathcal M)$
and $g\in \mathcal{C}(\mathcal N)$, let $\mathcal U$ with $\mathcal M \subset \mathcal U \subset \mathcal N$
be open and so small that $(t,x)\in \mathcal U$ implies $\{t\}\times B_{|d(t,x)|} (x) \subset \mathcal N$.
Define $f:\mathcal U \to \R$ by means of
\begin{equation}\label{extension-time}
f(t,x)= f^\Sigma (t,\pi (t,x)) - \frac{d(t,x)}{V(d(t,x))} \int_{||x-y||\leq |d(t,x)|} g(t,y)\, dy
\;\; \mbox{ on } 
\mathcal U
\end{equation}
with $d:=d_\Sigma$ and $\pi:=\pi_\Sigma$.
Then $f\in \mathcal{C} (\mathcal U)$ and $f(t,\cdot)\in \mathcal{C}^1 (\mathcal U^t)$, where
$\mathcal U^t := \{x\in \R^n : (t,x)\in \mathcal U \}$ is an open neighborhood of $\Sigma (t)$.
Moreover, the spatial derivatives $\partial_{x_k} f$ are given by \eqref{partialkf} on $\mathcal M$,
and by \eqref{nablafonS} on $\mathcal U \setminus \mathcal M$
with obvious modifications in form of the additional variable $t$.
\end{cor}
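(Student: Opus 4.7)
The plan is to reduce Corollary~\ref{extension2} to Proposition~\ref{prop-extension1} by freezing the time variable, and then to establish the missing joint continuity in $(t,x)$ through a rescaling argument that removes the apparent singularity at $\mathcal{M}$.

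For the slice-wise part, fix $t\in J$. The slice hypersurface $\Sigma(t)$ is a boundaryless $\mathcal{C}^2$-hypersurface in $\R^n$, and by Lemma~\ref{signed-distance} the maps $x\mapsto d_\Sigma(t,x)$ and $x\mapsto \pi_\Sigma(t,x)$ furnish, on a tubular neighborhood of $\Sigma(t)$, precisely the signed distance function and metric projection required in Proposition~\ref{prop-extension1}. Moreover, $f^\Sigma(t,\cdot)\in \mathcal{C}^1(\Sigma(t))$ because $\nabla_\Sigma f^\Sigma\in \mathcal{C}(\mathcal{M})$, while $g(t,\cdot)$ is continuous on $\mathcal{U}^t$. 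Applying Proposition~\ref{prop-extension1} to this data therefore yields $f(t,\cdot)\in \mathcal{C}^1(\mathcal{U}^t)$ together with the claimed derivative formulas on $\Sigma(t)$ and on $\mathcal{U}^t\setminus \Sigma(t)$, simply with $\pi$, $d$, $n_\Sigma$ carrying the extra $t$-argument.

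For the joint continuity $f\in \mathcal{C}(\mathcal{U})$, the substitution $y=x+d_\Sigma(t,x)z$ recasts the defining integral as an average over the unit ball. Since the Jacobian $|d_\Sigma(t,x)|^n$ cancels the denominator $V(d_\Sigma(t,x))=\omega_n |d_\Sigma(t,x)|^n$ cleanly, leaving only $d_\Sigma(t,x)/\omega_n$ (with no sign pathology), we obtain on $\mathcal{U}\setminus \mathcal{M}$ the representation
\begin{equation*}
f(t,x) = f^\Sigma\bigl(t,\pi_\Sigma(t,x)\bigr) - \frac{d_\Sigma(t,x)}{\omega_n} \int_{\|z\|\leq 1} g\bigl(t, x+d_\Sigma(t,x)z\bigr)\, dz.
\end{equation*}
The first summand is continuous as a composition of continuous maps. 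For the second, continuity on $\mathcal{U}\setminus\mathcal{M}$ follows from the dominated convergence theorem applied to $(t_k,x_k)\to (t,x)$, using continuity of $g$, $d_\Sigma$ and local boundedness of $g$ on the relevant compact subset of $\mathcal{N}$. As $d_\Sigma(t,x)\to 0$, the entire integral term vanishes, and $f^\Sigma(t,\pi_\Sigma(t,x))\to f^\Sigma(t,x)$, matching the prescribed value on $\mathcal{M}$; hence the extension is continuous across $\mathcal{M}$ as well.

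The main obstacle is bookkeeping rather than analysis: one must verify that the neighborhood $\mathcal{U}$ indicated in the statement is small enough that $\overline{B_{|d_\Sigma(t,x)|}(x)}\subset \mathcal{N}^t$ for all $(t,x)\in \mathcal{U}$, so that the integrand is defined and the rescaled integrand $z\mapsto g(t,x+d_\Sigma(t,x)z)$ remains in a compact subset of $\mathcal{N}$ along any converging sequence. This is exactly what the hypothesis on $\mathcal{U}$ guarantees, so no further construction is needed. Note that the corollary only claims continuity of $f$ (not $\mathcal{C}^1$) in the joint variable and only spatial $\mathcal{C}^1$-regularity, which is consistent with $f^\Sigma$ and $g$ having no assumed time-regularity beyond continuity; this is why the argument does not require, and cannot produce, any statement about $\partial_t f$.
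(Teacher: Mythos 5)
Your proposal is correct and follows essentially the same route as the paper, which disposes of this corollary in one sentence by declaring it an immediate consequence of inspecting the proof of Proposition~\ref{prop-extension1} in the time-dependent setting; your slice-wise application of that proposition plus the rescaling of the integral to $\frac{d_\Sigma(t,x)}{\omega_n}\int_{\|z\|\le 1} g(t,x+d_\Sigma(t,x)z)\,dz$ and dominated convergence is exactly the mechanism already present in that proof, now carried out with the extra parameter $t$. (Minor remark: the statement's assignment of \eqref{partialkf} to $\mathcal M$ and \eqref{nablafonS} to $\mathcal U\setminus\mathcal M$ is evidently a typo --- the roles are reversed, as your appeal to Proposition~\ref{prop-extension1} correctly implies.)
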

We are now able to prove the following key extension result.
\begin{lem}\label{part-ext}
Let $J=(a,b)\subset \R$ and $\{ \Sigma (t) \}_{t\in J}$ be a $\mathcal{C}^{1,2}$-family of moving hypersurfaces
without boundary in $\R^n$ with $\mathcal{M} = {\rm gr} (\Sigma)$.
Let $v^\Sigma \in \mathcal{C}(\mathcal{M};\R^n)$ be consistent to $\mathcal{M}$
with $\nabla_\Sigma v^\Sigma \in \mathcal{C}(\mathcal{M};\R^{n\times n})$.
Then there exists a neighborhood $\mathcal U$ of $\mathcal M$ and an extension
$\hat{v}^\Sigma :\mathcal U \to \R^n$ of $v^\Sigma$ being jointly continuous
and locally Lipschitz continuous in $x$ such that,
with $\Phi^t_{t_0}$ the (local) flow map associated to $\hat{v}^\Sigma$, the evolution of the normal field satisfies \eqref{normal-evolution}.
In particular, the intrinsic surface velocity $v^\Sigma=V_\Sigma n_\Sigma$ admits such an extension.
\end{lem}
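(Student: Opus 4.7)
The plan is to translate the identity \eqref{normal-evolution}—a statement about the Jacobian of the flow of $\hat v^\Sigma$—into a pointwise first-order compatibility condition on $\hat v^\Sigma$ along $\mathcal M$, and then to invoke Corollary~\ref{extension2} componentwise to build an extension that realises this condition. The consistency of $v^\Sigma$ guarantees that any such extension pushes $\Sigma(t_0)$ onto $\Sigma(t)$ via its flow (by Lemma~\ref{lemma-consistency}), so the content of \eqref{normal-evolution} is an infinitesimal matching of normals along trajectories.

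\textbf{Setting up the compatibility condition.} Given any candidate extension $\hat v^\Sigma$ that is continuous and locally Lipschitz in $x$, the Picard--Lindel\"of theorem provides a local flow $\Phi^t_{t_0}(y)$. Fix $y\in\Sigma(t_0)$, set $n_0:=n_{\Sigma(t_0)}(y)$, and abbreviate $\Phi(t):=\Phi^t_{t_0}(y)$. Differentiating the ODE in $y$ shows that
\[
m(t):=D_y\Phi(t)\,n_0
\]
satisfies $\dot m=D_x\hat v^\Sigma(t,\Phi)\,m$ with $m(t_0)=n_0$. Using the $\mathcal C^1$-extension $N(t,x):=\nabla_x d_\Sigma(t,x)$ of the normal field (Lemma~\ref{signed-distance}, so that $N=n_\Sigma$ on $\mathcal M$ with $\partial_t N$, $D_xN$ continuous on $\mathcal N$), the curve $n(t):=n_{\Sigma(t)}(\Phi(t))=N(t,\Phi(t))$ satisfies
\[
\dot n(t)=\partial_tN(t,\Phi(t))+D_xN(t,\Phi(t))\,v^\Sigma(t,\Phi(t)),\qquad n(t_0)=n_0.
\]
ODE uniqueness then forces $m\equiv n$ provided the two linear right-hand sides agree along $\Phi$, which (since $y$ is arbitrary and $\Phi(t)\in\Sigma(t)$) reduces to the algebraic requirement
\begin{equation*}
D_x\hat v^\Sigma(t,x)\,n_\Sigma(t,x)=\partial_tN(t,x)+D_xN(t,x)\,v^\Sigma(t,x)\qquad\text{on }\mathcal M. \tag{$\ast$}
\end{equation*}

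\textbf{Constructing the extension.} To enforce $(\ast)$, extend $v^\Sigma$ continuously to $\mathcal N$, e.g.\ by $\tilde v(t,x):=v^\Sigma(t,\pi_\Sigma(t,x))$, and set
\[
g(t,x):=-\bigl(\partial_tN(t,x)+D_xN(t,x)\,\tilde v(t,x)\bigr)\in\mathcal C(\mathcal N;\R^n).
\]
For each component $i=1,\dots,n$, Corollary~\ref{extension2} applied to $f^\Sigma:=v^\Sigma_i$ and $g:=g_i$ yields $\hat v^\Sigma_i\in\mathcal C(\mathcal U)$ with $\hat v^\Sigma_i(t,\cdot)\in\mathcal C^1(\mathcal U^t)$ and, by \eqref{nablafonS},
\[
\nabla_x\hat v^\Sigma_i(t,x)=\nabla_\Sigma v^\Sigma_i(t,x)-g_i(t,x)\,n_\Sigma(t,x)\quad\text{on }\mathcal M.
\]
Contracting with $n_\Sigma$ kills the tangential term $\nabla_\Sigma v^\Sigma_i$ and recovers exactly the $i$-th component of $(\ast)$. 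Inspection of \eqref{partialkf} and \eqref{extension-time} shows the integrals involved are jointly continuous in $(t,x)$ after rescaling to the unit ball, so $\nabla_x\hat v^\Sigma\in\mathcal C(\mathcal U;\R^{n\times n})$ and $\hat v^\Sigma$ is therefore locally Lipschitz in $x$ uniformly on compact subsets of $\mathcal U$.

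\textbf{Conclusion and main obstacle.} With $(\ast)$ in force, the curves $m$ and $n$ above solve the same linear ODE with the same initial condition, hence coincide on the interval of existence, which is \eqref{normal-evolution}. The final assertion for $v^\Sigma=V_\Sigma n_\Sigma$ follows by applying the construction to this particular consistent field, whose required regularity was established in Corollary~\ref{cor-natvelo}. The principal subtlety is the derivation of $(\ast)$: the formal expression $D_x n_\Sigma$ is undefined a priori (as $n_\Sigma$ lives only on $\mathcal M$), and must be legitimised through the $\mathcal C^1$-extension $\nabla_x d_\Sigma$, whose existence and regularity are precisely what Lemma~\ref{signed-distance} delivers. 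Everything else is then bookkeeping: choosing $g$ so that Corollary~\ref{extension2} prescribes the correct normal derivative, and noting that only the normal component of $D_x\hat v^\Sigma\cdot n_\Sigma$ enters $(\ast)$, since tangential derivatives of $\hat v^\Sigma$ along $\mathcal M$ are already fixed by $v^\Sigma$.
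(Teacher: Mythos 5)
Your proof is correct and has the same architecture as the paper's: both use the averaging extension of Corollary~\ref{extension2} to prescribe, via \eqref{nablafonS}, the normal derivative of $\hat v^\Sigma$ on $\mathcal M$ so that it equals the Lagrangian rate of change of the normal field along the flow, and both then deduce \eqref{normal-evolution} from an ODE argument for the linearized flow. The genuine difference is in how that target quantity is produced and how the final step is packaged. The paper writes it as $-\sum_k\langle\partial_{\tau_k}v^\Sigma,n_\Sigma\rangle\,\tau_k$ using a locally constructed orthonormal tangent frame (Gram--Schmidt) in \eqref{Fdef}, cites Theorem~4 of \cite{FKB-2019} for the identity $\tfrac{d}{dt}\,n_{\Sigma(t)}(\Phi^t_{t_0}(y))=-\sum_k\langle\partial_{\tau_k}v^\Sigma,n_\Sigma\rangle\,\tau_k$, and then verifies that $\tfrac{d}{dt}\big([D_y\Phi^t_{t_0}]^{-1}n_{\Sigma(t)}\circ\Phi^t_{t_0}\big)=0$. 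You instead compute $\dot n$ by the chain rule through the jointly $\mathcal C^1$ extension $N=\nabla_x d_\Sigma$ supplied by Lemma~\ref{signed-distance}, take $g=-(\partial_t N+D_xN\,\tilde v)$, and compare $D_y\Phi^t_{t_0}\,n_0$ with $N(t,\Phi^t_{t_0})$ as solutions of the same linear ODE $\dot m=D_x\hat v^\Sigma(t,\Phi)\,m$. This is frame-free and self-contained (no appeal to \cite{FKB-2019}), and it buys the same conclusion: the two choices of $g$ coincide on $\mathcal M$ (both equal minus the Lagrangian derivative of the normal), which is all that \eqref{nablafonS} sees, and your reliance on joint continuity of $\nabla_x\hat v^\Sigma$ for the variational equation is exactly the regularity the paper also uses implicitly when invoking Schwarz's theorem.
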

\begin{proof}
Since the statement is about local properties of the desired extension, we may consider a small neighborhood
$U_\epsilon =(\eta-\epsilon, \eta + \epsilon) \times B_\epsilon (\xi)$ of
a point $(\eta,\xi)\in \mathcal M$ in which the moving hypersurfaces are given by means of the signed distance function
from Lemma~\ref{signed-distance}. We then extend the given function $v^\Sigma$ from $\mathcal M \cap U_\epsilon$ to a function
$\hat{v}^\Sigma$ on $U_\epsilon$ by means of
\begin{equation}\label{ext-vsigma}
\hat{v}^\Sigma (t,x) := v^\Sigma (t,\pi (t,x))- \frac{d (t,x)}{V(d(t,x))} \, F(t,x)\vspace{-0.08in}
\end{equation}
with\vspace{-0.28in}\\
\begin{equation}\label{Fdef}
F(t,x)= \int_{B_{|d(t,x)|}(x)} \sum_{k=1}^{n-1}
\langle \frac{\partial v^\Sigma}{\partial \tau_k}(t,\pi (t,y)), n_\Sigma (t,\pi (t,y)) \rangle \,
\tau_k (t,\pi (t,y)) dy,
\end{equation}
where $d:=d_\Sigma$, $\pi:=\pi_\Sigma$ is the projection from Lemma~\ref{signed-distance} and
\begin{equation}
\{ \tau_k (t,x) : k=1, \ldots , n-1 \}
\;\;  \mbox{ for } (t,x)\in \mathcal M \cap U_\epsilon
\end{equation}
is an orthonormal basis of the tangent space to $\Sigma(t)$ at the point $x$,
depending continuously differentiable on $(t,x)\in \mathcal M \cap U_\epsilon$.
Note that we obtain such an orthonormal basis with the desired regularity by applying the
Gram-Schmidt orthonormalization procedure to the system
\begin{equation}
\{\tau_k^0 - \langle \tau_k^0, n_\Sigma (t,x) \rangle \, n_\Sigma (t,x) \,:\, k = 1,\ldots ,n-1 \}
\end{equation}
with $\{\tau_k^0 :\, k = 1,\ldots ,n-1 \}$ being a basis of the tangent space to $\Sigma (\eta)$ at the point $\xi$.
By choosing $\epsilon >0$ sufficiently small, this is a system of linearly independent vectors on $\mathcal M \cap U_\epsilon$ and
the elements depend continuously differentiable on $(t,x)$ since $n_\Sigma$ has this regularity.

Now observe that the components of $\hat{v}^\Sigma (t,x)$ in \eqref{ext-vsigma} are precisely of the type as given in
\eqref{extension-time} and the integrand in \eqref{Fdef} is continuous due to our assumptions on $\{ \Sigma (t) \}_{t\in J}$
and $v^\Sigma$.
Therefore, by Corollary~\ref{extension2}, $\hat{v}^\Sigma$ is continuous in
$\mathcal M \cap U_\epsilon$ and the $\hat{v}^\Sigma (t,\cdot)$ are continuously differentiable on a neighborhood of $\Sigma (t)$.
In particular, $\hat{v}^\Sigma$ is jointly continuous and locally Lipschitz continuous in $x$ and, hence,
the initial value problems \eqref{IVP} are uniquely solvable for right-hand side $\hat{v}^\Sigma$, at least locally in time.
Consequently, the associated flow map $\Phi^t_{t_0}$ is welldefined.
Moreover, $\Phi^t_{t_0}$ is invertible with inverse $\Phi^{t_0}_t$, hence a diffeomorphism due to the regularity of $\hat{v}^\Sigma$.
Thus, $D_y \Phi^t_{t_0}(y)$ is invertible.

Moreover, by Corollary~\ref{extension2}, we also know that $\hat{v}^\Sigma$ satisfies
\begin{equation}\label{normal-derivative}
\frac{\partial \hat{v}^\Sigma}{\partial n_\Sigma}(t,x) =
- \sum_{k=1}^{n-1} \langle \frac{\partial v^\Sigma}{\partial \tau_k}(t,x), n_\Sigma (t,x) \rangle \, \tau_k (t,x)
\;\; \mbox{ for } (t,x)\in \mathcal M \cap U_\epsilon.
\end{equation}

In order to prove \eqref{normal-evolution}, we consider the equivalent relation
\begin{equation}\label{normal-evolution2}
\big[ D_y \Phi^t_{t_0}(y) \big]^{-1}  n_{\Sigma (t)} (\Phi^t_{t_0} (y)) = n_{\Sigma (t_0)} (y)
\quad \forall \, t_0 \in J, y\in \Sigma (t_0), t\in J_{t_0,y}.
\end{equation}
Evidently, equation \eqref{normal-evolution2} holds for $t=t_0$. Therefore, it holds for all $t\in J_{t_0,y}$, if
we show that the $t$-derivative of the left-hand side vanishes. We have
\begin{align*}
\frac{d}{dt} \big[ D_y \Phi^t_{t_0}(y) \big]^{-1}  n_{\Sigma (t)} (\Phi^t_{t_0} (y)) =\\[1ex]
-\, \big[ D_y \Phi^t_{t_0}(y) \big]^{-1}  \partial_t D_y \Phi^t_{t_0}(y) \big[ D_y \Phi^t_{t_0}(y) \big]^{-1} n_{\Sigma (t)} (\Phi^t_{t_0} (y))\\[1ex]
+ \big[ D_y \Phi^t_{t_0}(y) \big]^{-1} \, \frac{d}{dt} n_{\Sigma (t)} (\Phi^t_{t_0} (y)).
\end{align*}
We now employ Schwarz' theorem to get
\[
\partial_t D_y \Phi^t_{t_0}(y) = D_y \partial_t \Phi^t_{t_0}(y) = D_y \hat{v}^\Sigma (t,\Phi^t_{t_0} (y))
= \nabla_x \hat{v}^\Sigma (t,\Phi^t_{t_0} (y)) \, D_y \Phi^t_{t_0}(y)
\]
which yields
\begin{align*}
\frac{d}{dt} \big[ D_y \Phi^t_{t_0}(y) \big]^{-1}  n_{\Sigma (t)} (\Phi^t_{t_0} (y)) =\\[1ex]
\big[ D_y \Phi^t_{t_0}(y) \big]^{-1} \, \Big(
\frac{d}{dt} n_{\Sigma (t)} (\Phi^t_{t_0} (y)) - \nabla_x \hat{v}^\Sigma (t,\Phi^t_{t_0} (y)) \, n_{\Sigma (t)} (\Phi^t_{t_0} (y))
\Big).
\end{align*}
Due to Theorem~4 in \cite{FKB-2019} (extended from hypersurface in $\R^3$ to $\R^n$), the Lagrangian derivative of the normal field
satisfies
\begin{equation}
\frac{d}{dt} n_{\Sigma (t)} (\Phi^t_{t_0} (y)) =
- \sum_{k=1}^{n-1} \langle \frac{\partial v^\Sigma}{\partial \tau_k}(t,\Phi^t_{t_0} (y)), n_{\Sigma (t)} (\Phi^t_{t_0} (y)) \rangle
\, \tau_k (t,\Phi^t_{t_0} (y)).
\end{equation}
This relation, together with the normal derivative of $\hat{v}^\Sigma$ according to \eqref{normal-derivative}
shows that
\[
\frac{d}{dt} \big[ D_y \Phi^t_{t_0}(y) \big]^{-1}  n_{\Sigma (t)} (\Phi^t_{t_0} (y)) =0
\]
along the solution of \eqref{E7}, hence \eqref{normal-evolution} holds.
\end{proof}
\section{The ODE associated with a two-phase flow}
Let $\Omega \subset \R^n$ be an open set, denoting the domain of a two-phase flow.
We consider a $\mathcal{C}^{1,2}$-family of moving hypersurfaces which decomposes $\Omega$ into disjoint sets
according to $\Omega=\Omega^+(t) \cup \Omega^-(t) \cup \Sigma(t)$.
We focus on the case when the $\Sigma(t)$ are hypersurfaces of $\R^n$ without boundary. Hence $J \times \Omega$ is cut by $\mathcal{M}={\rm gr}(\Sigma)$ into two (not necessarily connected) parts $G^+$ and $G^-$, where $G^\pm={\rm gr}(\Omega^\pm)$.
Now, let $v^\pm:\overline{G^\pm}\to \R^n$ be continuous vector fields which are locally Lip\-schitz continuous
in $x$, separately on $\overline{G^+}$, respectively $\overline{G^-}$.
We also assume at most linear growth in $x$, i.e.\
\begin{equation}\label{growth-pm}
| v^\pm (t,x) | \leq c\, (1 + |x|) \;\mbox{ for all } t\in J, x\in \Omega^\pm (t)
\end{equation}
with some $c>0$.
We denote by $v$ without superscript the map with values $v^\pm$ on $\overline{G}^\pm$
which is not uniquely defined on $\mathcal{M}$, but attains two possible distinct values there, i.e.\ $v$ is multi-valued on $\mathcal{M}$. We then study the discontinuous differential equation
\begin{equation}\label{E12}
\dot{x}(t)=v\big(t,x(t)\big) \;\text{ on } J,\quad x(t_0)=x_0
\end{equation}
for $t_0 \in J$, $x_0 \in \Omega$. Note that we are slightly abusing notation here, since it should actually read
\[
\dot{x}(t)\in v\big(t,x(t)\big) \;\text{ on } J,\quad x(t_0)=x_0.
\]
But this is not relevant if, along the solution, the multivaluedness of $v$ only occurs for $t$ from a set of Lebesgue measure zero. We hence stick to \eqref{E12} and employ the following solution concept.
\begin{defi}
We call an absolutely continuous function $x:J \to \R^n$  a solution of (\ref{E12}), if $x(t_0)=x_0$,
$N:=\{t \in J: v\big(t, x(t)\big) \text{ is multivalued} \hspace{1pt} \}$
is a set of Lebesgue measure zero and $\dot{x}(t)=v\big(t, x(t)\big)$ a.e.\ on $J\setminus N$.
\end{defi}

We are interested in physically relevant conditions on $v$ and $\Sigma$ such that (\ref{E12}) has unique strong solutions, locally in time, for every initial value. Motivated by (\ref{E3}), we impose the transmission condition
\begin{equation}\label{transcond}
\rho^+ (v^+-v^\Sigma)\cdot n_\Sigma=\rho^- (v^--v^\Sigma)\cdot n_\Sigma
\quad\text{ on } \mathcal{M}
\end{equation}
with locally Lipschitz functions $\rho^\pm :\overline{G^\pm}\to (0,\infty)$.
Observe that this implies the transversality-type condition
\begin{equation}\label{E13}
\sgn_{\!0} \big((v^+-v^\Sigma)\cdot n_\Sigma\big)=\sgn_{\!0}\big((v^--v^\Sigma)\cdot n_\Sigma\big)\quad\text{ on } \mathcal{M},
\end{equation}
where $\sgn_{\!0} (0):=0$; recall that $v^\pm$ have unique one-sided limits at every $x \in \Sigma(t)$, $t \in J$.

In addition, we assume (\ref{E5}) to hold, i.e.\ the tangential parts of $v^\pm$ satisfy
\begin{equation}\label{E14}
v^+_{\|}=v^-_{\|}\quad\text{ on } \mathcal{M}.
\end{equation}
Since $v^\Sigma$ enters our assumptions only via $V_\Sigma=v^\Sigma \cdot n_\Sigma$, we may assume
\begin{equation}\label{vtan}
v^\Sigma_{\|}=v^\pm_{\|}.
\end{equation}
\section{Wellposedness of the ODE from two-phase flow}
We now give the main result of this paper.
\begin{theo}\label{T1}
Let $J=(a,b)\subset \R$ and $\{ \Sigma (t) \}_{t\in J}$ be a $\mathcal{C}^{1,2}$-family of moving hypersurfaces
in $\R^n$ without boundary which divide an open set $\Omega\subset \R^n$ into $\Omega^+(t)\cup \Omega^- (t) \cup \Sigma(t)$
for all $t\in J$ with time-dependent bulk phases $\Omega^\pm (t)$.
Let\\[-2ex]
\[
	v^\pm: {\rm gr}\big(\overline{\Omega^\pm(\cdot)}\big) \to \R^n
\]
be continuous in $(t,x)$ and locally Lipschitz continuous in $x$ such that (\ref{transcond}) and (\ref{E14}) are valid,
where $v^\Sigma :=V_\Sigma n_\Sigma$ is the consistent intrinsic interface velocity associated to $\{ \Sigma (t) \}_{t\in J}$.
Then, for given $t_0 \in J$ and $x_0 \in \Omega$, the initial value problem (\ref{E12}) has a unique a.c.\ solution, locally in time.
This solution is also the unique Filippov solution of (\ref{E12}).
\end{theo}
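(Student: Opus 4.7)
My plan is to reduce \eqref{E12} to a classical locally Lipschitz ODE by two successive changes of variables, one fixing the moving interface and one absorbing the density jump, and then to invoke the Picard-Lindel\"of theorem. The Filippov uniqueness then follows from a short case check based on the transversality built into \eqref{E13}.

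\textbf{Step 1: reducing to a fixed interface.} Let $\hat v^\Sigma$ be the extension of the intrinsic velocity $v^\Sigma = V_\Sigma n_\Sigma$ produced by Lemma~\ref{part-ext}, and let $\Phi^t_{t_0}$ denote its local flow. The substitution $x(t) = \Phi^t_{t_0}(y(t))$ converts \eqref{E12} into
\[
\dot y = \tilde v^\pm(t,y) := [D_y\Phi^t_{t_0}(y)]^{-1}\bigl(v^\pm(t,\Phi^t_{t_0}(y)) - \hat v^\Sigma(t,\Phi^t_{t_0}(y))\bigr),
\]
whose phase boundaries are now the time-independent sets $\Omega^\pm(t_0)$ separated by $\Sigma_0 := \Sigma(t_0)$. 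The normal-preserving identity \eqref{normal-evolution}, together with the tangent-preserving property of $\Phi^t_{t_0}|_{\Sigma_0}$, forces $D_y\Phi^t_{t_0}$ at points of $\Sigma_0$ to be block-diagonal with identity in the normal block, relative to the orthogonal splitting $T\Sigma_0 \oplus \mathrm{span}(n_{\Sigma_0})$. Consequently, \eqref{transcond} and \eqref{E14} transfer verbatim to $\tilde v^\pm$: on $\Sigma_0$ one has $\tilde v^+_\| = \tilde v^-_\|$ and $\rho^+\,\tilde v^+\cdot n_{\Sigma_0} = \rho^-\,\tilde v^-\cdot n_{\Sigma_0}$.

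\textbf{Step 2: density-adapted normal coordinate.} Introduce local tubular coordinates $(y_\|, y_n)$ near a point of $\Sigma_0$, so that $\Sigma_0 = \{y_n = 0\}$ and $y_n$ is the signed distance. Extend $\rho^\pm$ Lipschitz-continuously to small two-sided neighborhoods and define
\[
\zeta(t, y_\|, y_n) := \int_0^{y_n} \rho(t, y_\|, s)\, ds,
\]
with $\rho$ meaning $\rho^\pm$ on $\{\pm y_n > 0\}$. Since $\rho$ is bounded below, $y_n \mapsto \zeta$ is a $\mathcal C^1$-diffeomorphism in each half-space extending continuously across $\{y_n = 0\} \leftrightarrow \{\zeta = 0\}$. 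A direct differentiation yields
\[
\dot y_\| = \tilde v_\|, \qquad \dot\zeta = \rho\,\tilde v_n + \int_0^{y_n}\bigl(\partial_t\rho + \nabla_{y_\|}\rho\cdot\tilde v_\|\bigr)\, ds,
\]
and in the coordinates $(y_\|,\zeta)$ both right-hand sides are continuous across $\{\zeta = 0\}$: the correction integral vanishes there, the tangential parts match by \eqref{E14}, and the transmission condition makes $\rho\,\tilde v_n$ continuous. A short Taylor estimate using Lipschitzness of $\rho^\pm$ and $\tilde v^\pm$ on each closed phase, together with $|y_n(\zeta_1)-y_n(\zeta_2)|\lesssim|\zeta_1-\zeta_2|$, upgrades continuity to joint local Lipschitz continuity across the interface. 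Picard-Lindel\"of then delivers a unique forward-and-backward a.c.\ solution in $(y_\|,\zeta)$-coordinates, which pulls back to a unique a.c.\ solution of \eqref{E12}.

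\textbf{Filippov uniqueness and main obstacle.} On $\mathcal M$ the Filippov regularisation is $F(t,x) = \mathrm{conv}\{v^+(t,x), v^-(t,x)\}$. A case check on $\lambda \in [0,1]$ in the tangency equation $\lambda v^+_n + (1-\lambda) v^-_n = V_\Sigma$ shows, thanks to \eqref{E13} and \eqref{transcond}, that no admissible $\lambda$ exists unless both sides of \eqref{E13} already vanish -- in which case $v^+ = v^-$ by \eqref{E14}. Hence every Filippov solution spends zero-measure time on $\mathcal M$ outside the set where $v$ is single-valued, so it is an a.c.\ solution of \eqref{E12} and coincides with the one constructed above. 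The technically delicate point of the whole argument is the Lipschitz verification across $\{\zeta = 0\}$ in Step 2: the coordinate $\zeta$ is designed precisely so that the jumps of $\tilde v_n$ and of $\partial y_n/\partial\zeta = 1/\rho$ compensate, and deducing a Lipschitz modulus of order $|\zeta_1 - \zeta_2|$ (rather than the a priori $O(1)$) requires Taylor-expanding $\rho\,\tilde v_n$ on each side about the interface and cancelling the zeroth-order jump via the transmission identity.
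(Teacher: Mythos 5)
Your Steps 1 and 3 (reduction to a fixed interface via the flow of $\hat v^\Sigma$ from Lemma~\ref{part-ext}, preservation of \eqref{transcond} and \eqref{E14} through \eqref{normal-evolution}, flattening in tubular coordinates, and the measure-theoretic case check showing that Filippov solutions only meet the multivalued set on a null set) follow the paper's route and are sound. The divergence is in Step 2: where the paper keeps the original coordinates and runs a Gronwall argument on the hybrid functional $\phi(t)=|\rho(t)x_n(t)-\overline{\rho}(t)\overline{x}_n(t)|+\|x_{\|}(t)-\overline{x}_{\|}(t)\|$, you attempt a genuine rectification $\zeta=\int_0^{y_n}\rho\,ds$ so that Picard--Lindel\"of applies. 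This is where there is a real gap. The theorem only assumes the densities $\rho^\pm$ to be \emph{locally Lipschitz}, so $\partial_t\rho$ and $\nabla_{y_\|}\rho$ exist merely a.e.\ and are only $L^\infty$. Your transformed normal equation contains the term $\int_0^{y_n}\bigl(\partial_t\rho+\nabla_{y_\|}\rho\cdot\tilde v_\|\bigr)\,ds$, and this term need not be Lipschitz --- or even continuous --- in $y_\|$. Concretely, take $\rho(t,y_1,s)=1+\tfrac12\max(0,\,t-y_1)$: then $\partial_t\rho$ equals $\tfrac12$ for $t>y_1$ and $0$ for $t<y_1$, so
\begin{equation*}
\int_0^{y_n}\partial_t\rho(t,y_1,s)\,ds=\tfrac{y_n}{2}\ \text{ for } t>y_1,\qquad 0\ \text{ for } t<y_1,
\end{equation*}
which is discontinuous in $y_1$ at $y_1=t$ whenever $y_n\neq 0$, i.e.\ \emph{away} from the interface. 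A bi-Lipschitz but non-$C^1$ change of variables does not preserve the Lipschitz structure of the vector field, so Picard--Lindel\"of is not applicable to the $(y_\|,\zeta)$-system, and your existence and uniqueness conclusion does not follow as stated. Your closing remarks identify the "delicate point" as the cancellation of the zeroth-order jump of $\rho\,\tilde v_n$ across $\{\zeta=0\}$ --- that part is actually fine (two one-sided Lipschitz pieces with matching traces on a hyperplane glue to a Lipschitz function) --- but the true obstruction is the spatial and temporal differentiation of $\rho$ that the coordinate change forces on you.

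The paper's functional $\phi$ is designed precisely to avoid this: $\rho(t)=\tilde\rho^\pm(t,x^\Sigma(t))$ is only ever differentiated \emph{along a single Lipschitz curve}, so $\rho'(t)$ exists a.e., is bounded, and enters the differential inequality only through the harmless coefficient $|\rho'/\rho|$ multiplying $\phi$ itself; no spatial regularity of $\partial_t\rho$ or $\nabla\rho$ is needed. Your approach would go through essentially as written if one strengthened the hypothesis to $\rho^\pm\in\mathcal{C}^1$ (then the correction integral is $C^1$ in $(t,y_\|,y_n)$ and $O(|\zeta|)$ near the interface, and the glued field is locally Lipschitz), and in that restricted setting it buys a cleaner statement --- a single classical ODE with forward and backward uniqueness at once, rather than a one-sided Gronwall estimate plus time reversal. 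To cover the theorem as stated, you must either replace the full coordinate change by a Gronwall argument on a density-weighted distance (as the paper does), or justify uniqueness for the transformed system by some means other than Picard--Lindel\"of.
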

\noindent
{\bf Proof.}
The proof is given in several steps.\\[0.5ex]
{\bf Step 1.} \emph{Existence of solutions.}\\
In the specific situation under consideration, one can easily see that $F$ from \eqref{E15} is given by
\begin{equation}\label{E17}
F(t,x)= \begin{cases}
\{v^+(t,x)\} &\text{ if } x \in \Omega^+(t),\\
{\rm conv}\{v^+(t,x), v^-(t,x)\} &\text{ if } x \in \Sigma(t),\\
\{v^-(t,x)\} &\text{ if } x \in \Omega^-(t).
\end{cases}
\end{equation}
This multivalued map is even jointly usc such that classical existence results for differential inclusions with usc right-hand side apply; see \cite{Ac}, \cite{MDE}. Therefore, concerning the existence part,
it only remains to show that any a.c.\ solution $x(\cdot)$ of \eqref{MVIVP} with $F$ from (\ref{E17}) is actually an a.c.\ solution of (\ref{E12}). For this purpose, we will show that
\[
M:=\{t\in J: F(t,x(t)) \mbox{ is multivalued} \hspace{1pt} \}
\]
is a Lebesgue null set.
Evidently, $M\subset N:=\{t \in J: x(t) \in \Sigma(t)\}$, since for $t \in J\setminus N$ it holds that $F(t,x)=\{v(t,x)\}$, hence $\dot{x}(t)=v\big(t,x(t)\big)$ a.e.\ on $J\setminus N$. Since $x(\cdot)$ is a.c., the derivative $\dot{x}(t)$ exists a.e.\ on $J$, in particular a.e.\ on $N$. Given a (local)
level set representation of $\Sigma$ according to \eqref{levelset}, we have
\[
\phi\big(t,x(t)\big)=0 \quad\text{ on } N,
\]
hence also
\[
0= \frac{{\rm d}}{{\rm d}t} \phi \big(t,x(t)\big)= \partial_t \phi \big(t,x(t)\big)+ \dot{x}(t) \cdot \nabla \phi \big(t,x(t)\big) \quad \text{ a.e.\ on } N.
\]
Note that such a level set representation exists at least locally due to our regularity assumptions on $\Sigma$
by Lemma~\ref{signed-distance}.
Using \eqref{E11}, this implies
\[
\dot{x}(t) \cdot n_\Sigma \big(t,x(t)\big)= V_\Sigma \big(t,x(t)\big) \quad \text{ a.e. on } N.
\]
On the other hand,
\[
P_\Sigma \dot{x}(t) \in P_\Sigma F(t,x(t)) = \{ v^\pm_{||} (t,x(t)) \}
\]
due to \eqref{E14}.
Therefore, employing \eqref{vtan}, we obtain
\begin{equation}\label{E18}
\dot{x}(t)= V_\Sigma \big(t,x(t)\big) n_\Sigma\big(t,x(t)\big)+ v^\pm_{\|}\big(t,x(t)\big)
= v^\Sigma \big(t,x(t)\big) \quad\text{ a.e.\ on } N.
\end{equation}
Consequently,
\[
v^\Sigma \big(t,x(t)\big) \in {\rm conv}\{v^+(t,x(t)), v^-(t,x(t))\}
\quad\text{ for all } t\in N_0,
\]
where $N_0\subset N$ has $\lambda_1 (N\setminus N_0)=0$.
Taking inner product with $n_\Sigma$, this implies (with a slight abuse of notation)
\[
0 \, \in \, \big( {\rm conv}\{(v^+-v^\Sigma ) \cdot n_\Sigma ,
(v^- -v^\Sigma )\cdot n_\Sigma \} \big) \big( t,x(t) \big)
\quad\text{ for all } t\in N_0.
\]
For fixed $t\in N_0$, two cases are hence possible: either
\begin{equation}\label{doubleinequ}
\big(v^+-v^\Sigma \big) \cdot n_\Sigma \, \leq \, 0 \,  \leq \, \big(v^- -v^\Sigma \big)\cdot n_\Sigma
\quad \mbox{ at } (t,x(t))
\end{equation}
or the same with $v^+$, $v^-$ exchanged. We only consider the first case and assume that strict inequality holds
at least for one relation in \eqref{doubleinequ}. Then,
after multiplication by the factors $\rho^\pm >0$ on the respective side, we obtain
\[
\rho^+ \big(v^+-v^\Sigma \big) \cdot n_\Sigma < \rho^- \big(v^- -v^\Sigma \big)\cdot n_\Sigma
\quad \mbox{ at } (t,x(t)),
\]
a contradiction to the transversality condition (\ref{E13}).
This shows that
\[
v^+ \cdot n_\Sigma = v^\Sigma \cdot n_\Sigma  = v^- \cdot n_\Sigma \quad \mbox{ at } (t,x(t)).
\]
To sum up, it therefore holds that
\[
v^+(t,x(t))=v^-(t,x(t))\quad\text{ for all } t\in N_0,
\]
hence
$t\in N_0$ implies $t\not \in M$, i.e.\ $M\subset N\setminus N_0$ and thus $M$ is a null set.
Note that, up to here, less regularity of $v^\pm$ would be sufficient, say measurability in $t$ and local Lipschitz
continuity in $x$.\\

It remains to show \emph{uniqueness of a.c.\ solutions,} where we start with forward uniqueness.
For this purpose, let $x(\cdot)$ and $\overline{x}(\cdot)$ be two (distinct) strong solutions of (\ref{E12}) with common initial value $x_0$. Local-in-time (forward and backward)
uniqueness is clear in case $x_0 \notin \Sigma(t_0)$. So, we may assume $x_0 \in \Sigma(t_0)$ and have to show that $x(t)= \overline{x}(t)$ on $[t_0,t_0+\delta]$ for some $\delta>0$.
\\[1ex]
{\bf Step 2.} \emph{Reduction to fixed $\Sigma$ and vanishing tangential part of $v$.}\\[0.5ex]
Let $\hat{v}^\Sigma$ be the extension of $v^\Sigma:\mathcal{M}\to \R^n$ provided by Lemma~\ref{part-ext}.
Considering only local wellposedness, we may assume that $v^\Sigma$ and then also $\hat{v}^\Sigma$ are bounded
and that we may assume that $\hat{v}^\Sigma$ is given on all of $J \times \R^n$.
Hence $\hat{v}^\Sigma$ generates a global flow $\Phi^t_{t_0}:\R^n \to \R^n$ via $\Phi^t_{t_0}(y_0):=y(t;t_0,y_0)$, where $y(\cdot, t_0,y_0)$ is
the unique global solution of
\begin{equation}\label{E19}
\dot{y}(t)= \hat{v}^\Sigma\big(t,y(t)\big) \quad\text{ on } J,\quad y(t_0)=y_0.
\end{equation}
Note that $\Phi$ leaves $\Sigma(\cdot)$ invariant, which means that
\begin{equation}\label{E20}
\Sigma(t)= \Phi^t_{t_0} \big(\Sigma(t_0)\big) \quad\text{ for all } t, t_0 \in J.
\end{equation}
This follows by Lemma~\ref{lemma-consistency2}, since the vector field $v^\Sigma$ is consistent to $\mathcal M$.
Moreover, $\Phi^t_{t_0}$ also leaves $\Omega^+(\cdot)$, respectively $\Omega^-(\cdot)$ invariant since solutions cannot cross $\Sigma(\cdot)$ due to unique solvability.
Now $x(\cdot)$ is a strong solution of (\ref{E12}) iff the a.c.\ function $y(\cdot)$, implicitly defined by
\begin{equation}\label{E21}
x(t)= \Phi^t_{t_0} (y(t)),
\end{equation}
solves the initial value problem
\begin{equation}\label{E22}
\dot y (t)=f\big(t,y(t)\big)\quad\text{ on } J, \quad y(t_0)=x_0
\end{equation}
with right-hand side $f$ given by
\begin{equation}\label{E23}
f(t,y):= \big[ D_y \Phi^t_{t_0}(y)\big]^{-1}  \cdot \big( v (t, \Phi^t_{t_0}(y))- \hat{v}^\Sigma (t, \Phi^t_{t_0}(y)) \big).
\end{equation}
Note that $f$ is discontinuous at $(\Phi^t_{t_0})^{-1} \big(\Sigma(t)\big)=\Sigma(t_0)=:\Sigma_0$, but
the $f^\pm$ given by the right-hand side of \eqref{E23} on $\Omega^\pm(t_0)$
have the same regularity as the $v^\pm$, with continuous extensions onto the closure of $\Omega^\pm(t_0)$.
To rewrite the transmission condition (\ref{transcond}), let
\[
\hat{\rho}^\pm (t,y):= \rho^\pm (t, \Phi^t_{t_0}(y)), \qquad n(y):=n_{\Sigma_0} (y)
\]
and note that the $\hat{\rho}^\pm$ have the same regularity as the $\rho^\pm$. Then, for $y\in \Sigma_0$,
\[
\hat{\rho}^+ (t,y) f^+ (t,y) \cdot n(y) = \rho^+ (t,x) \big[ D_y \Phi^t_{t_0}(y)\big]^{-1}
(v^+ (t,x)- \hat{v}^\Sigma (t,x))\cdot n(y)
\]
with $x=\Phi^t_{t_0}(y)\in \Sigma(t)$. Due to \eqref{vtan}, we have
\[
v^+ (t,x)- \hat{v}^\Sigma (t,x) = \langle v^+ (t,x)- \hat{v}^\Sigma (t,x), n_\Sigma (t,x)\rangle \, n_\Sigma (t,x),
\]
hence (with shorthand notation)
\[
\big(\hat{\rho}^+ f^+ \big)(t,y) \cdot n(y) = \big( \rho^+  \langle v^+ - \hat{v}^\Sigma , n_\Sigma \rangle \big)(t,x)
\big[ D_y \Phi^t_{t_0}(y)\big]^{-1} n_\Sigma (t,x) \cdot n(y).
\]
Rewriting $\big( \hat{\rho}^- f^+ \big) (t,y) \cdot n(y)$ in an analogous way, we see that
(\ref{transcond}) becomes
\begin{equation}\label{E24a}
\hat{\rho}^+ f^+ \cdot n=\hat{\rho}^- f^- \cdot n \quad\text{ on } \Sigma_0,
\end{equation}
and the transversality condition (\ref{E13}) becomes
\begin{equation}\label{E24}
\sgn_{\!0} (f^+ \cdot n)=\sgn_{\!0} (f^- \cdot n)\quad\text{ on } \Sigma_0.
\end{equation}
We did not need the specific form of the extension $\hat{v}^\Sigma$ for the normal part,
but it is required for treating the tangential parts. In fact, with
\[
f^\pm (t,y) = \big[ D_y \Phi^t_{t_0}(y)\big]^{-1}  \, \big( v^+ (t, x)- v^\Sigma (t, x) \big)
\]
for $y\in \Sigma_0$ and $x=\Phi^t_{t_0}(y)\in \Sigma(t)$, condition (\ref{vtan}) implies
\begin{align*}
f^\pm (t,y) & = \big[ D_y \Phi^t_{t_0}(y)\big]^{-1}  \, \langle v^+ (t, x)- v^\Sigma (t, x) , n_\Sigma (t,x) \rangle \, n_\Sigma (t,x) \\[1ex]
& = \langle v^+ (t, x)- v^\Sigma (t, x) , n_\Sigma (t,x) \rangle \, \big[ D_y \Phi^t_{t_0}(y)\big]^{-1}  \,  n_\Sigma (t,x) \\[1ex]
& = \langle v^+ (t, x)- v^\Sigma (t, x) , n_\Sigma (t,x) \rangle \, n_{\Sigma(t_0)} (y)
\end{align*}
by \eqref{normal-evolution}. Consequently, condition (\ref{E14}) becomes
\begin{equation}\label{E25}
f^+_{\|}= f^-_{\|}=0 \quad\text{ on } \Sigma_0.
\end{equation}
\\
{\bf Step 3.} \emph{Reduction to $\Sigma\equiv \R^{n-1} \times \{0\}$}.\\[0.5ex]
By a translation and a rotation, we may assume $x_0=0$ and $n(0)=e_n$, the $n^{\rm th}$ Cartesian base vector. We are only interested in a local result, hence may assume that $\Sigma_0$ is a graph over $\R^{n-1}$ for a height function $h$, i.e.\ \begin{equation}\label{E26}
\Sigma_0=\{x=(x',x_n): x_n=h(x')\}
\end{equation}
with the notation $x'=(x_1,\ldots , x_{n-1})$. Consider the nonlinear transformation
\begin{equation}\label{E27}
x=\begin{bmatrix}
x'\\x_n
\end{bmatrix}
\to H(x)=
\begin{bmatrix}
x'-x_n \nabla_{x'} h(x')/ \big(1+\|\nabla_{x'} h(x')\|^2\big)^{1/2}\\
h(x') + x_n /\big(1+\|\nabla_{x'} h(x')\|^2\big)^{1/2}
\end{bmatrix}.
\end{equation}
For sufficiently small $\varepsilon, r>0$, $H$ is a diffeomorphism from $[\R^{n-1}\times(-\varepsilon,\varepsilon)]\cap B_r(0)$ onto its image $\mathcal N := H ([\R^{n-1}\times (-\varepsilon, \varepsilon)]\cap B_r(0))$,
which is a neighborhood of $0 \in \R^n$.
Given any solution $y(\cdot)$ of (\ref{E22}) starting at $x_0= 0 \in \Sigma_0$, this solution stays inside $\mathcal N$
for $t \in (-\delta , \delta)$,
where $\delta>0$ can be chosen independently of the solution due to the local boundedness of $f$.
The coordinate transformation induced by $H$ yields an a.c.\ function $x(\cdot)$ via
\begin{equation}\label{E28}
y(t)=H (x(t) ),
\end{equation}
which is an a.c.\ solution of
\begin{equation}\label{E29}
\dot x(t)=g\big(t,x(t)\big) \quad\text{ on } J_\delta:=(-\delta , \delta), \quad x(0)=0,
\end{equation}
where $g:J_\delta \times \big( [\R^{n-1}\times(-\varepsilon,\varepsilon)]\cap B_r(0)\big) \to \R^n$ is given as
\begin{equation}\label{gdef}
g(t,x)= \begin{cases}
g^+ (t,x) &\text{ if } x_n \geq 0\\
g^- (t,x) &\text{ if } x_n<0
\end{cases}
\end{equation}
with $g^\pm$ given by
\[
g^\pm (t,x)= H'(x)^{-1}  f^\pm (t, H(x)) \;\;\mbox{ for } x\in \R^n_\pm.
\]
Note that the specific definition of $g$ as $g^+$ for $x_n=0$ in \eqref{gdef} is arbitrary
and the concrete choice of the values there plays no role.
Note also that $g^+:J_\delta \times \R^n_+ \to \R^n$ and $g^- :J_\delta \times \R^n_- \to \R^n$ are jointly continuous and locally Lipschitz continuous in $x$, where $\R^n_\pm$ denote the closed halfspaces $\{x_n\geq 0\}$ and $\{x_n\leq 0\}$, respectively.

Evidently, $y\in \Sigma_0$ iff $x_n=0$ and for such $y=(x',h(x'))$ we have
\begin{equation}\label{n(y)}
n(y)= \frac{1}{(1+\|\nabla_{x'} h(x')\|^2 )^{1/2}}
\begin{bmatrix}
- \nabla_{x'} h(x') \\
1
\end{bmatrix}
\quad \mbox{ for } y=(x',h(x')).
\end{equation}
Given $t\in J_\delta$, $x=(x',0)$ and $y=H(x)\in \Sigma_0$, it holds that
\begin{align*}
& \hat{\rho}^\pm (t,y) f^\pm (t,y)\cdot n(y) \\[1ex]
& = \hat{\rho}^\pm (t,H(x)) \langle H'(x) g^\pm (t,x),
\begin{bmatrix}
- \nabla_{x'} h(x') \\
1
\end{bmatrix}
\rangle (1+\|\nabla_{x'} h(x')\|^2 )^{-1/2}\\[1ex]
& = \hat{\rho}^\pm (t,H(x)) \langle g^\pm (t,x), H'(x)^{\sf T}
\begin{bmatrix}
- \nabla_{x'} h(x') \\
1
\end{bmatrix}
\rangle (1+\|\nabla_{x'} h(x')\|^2 )^{-1/2}.
\end{align*}
Now note that
\begin{equation}\label{H'(x)}
H'(x)=
\left[
\begin{array}{c|c}
 & n_1 (x',h(x')) \\
I_{n-1} & \vdots \\
  & n_{n-1}(x',h(x')) \\[1ex]
\hline \\[-1ex]
 \nabla_{x'} h(x')^{\sf T}   & n_n (x',h(x'))
\end{array}
\right]
\quad \mbox{ for } x=(x',0)
\end{equation}
with $n(x',h(x'))=n(y)$ from \eqref{n(y)}, hence
\[
H'(x)^{\sf T}
\begin{bmatrix}
- \nabla_{x'} h(x') \\
1
\end{bmatrix}
= (1+\|\nabla_{x'} h(x')\|^2 )^{1/2} \, e_n.
\]
Consequently,
\[
\hat{\rho}^\pm (t,y) f^\pm (t,y)\cdot n(y) = \tilde{\rho}^\pm (t,x) \langle g^\pm (t,x), e_n \rangle = \tilde{\rho}^\pm (t,x)\, g_n^\pm (t,x)
\]
with
\begin{equation}\label{tilde-rho}
\tilde{\rho}^\pm (t,x) := \hat{\rho}^\pm (t,H(x)) \quad \mbox{ for } t\in J_\delta,\, x\in [\R^{n-1}\times(-\varepsilon,\varepsilon)]\cap B_r(0).
\end{equation}
This shows that the transmission condition \eqref{E24a} becomes
\begin{equation}\label{E30}
\tilde{\rho}^+(t,x) g^+_n(t,x) = \tilde{\rho}^- (t,x) g^-_n(t,x) \quad\text{ for } t \in J_\delta, \; x_n=0
\end{equation}
with locally Lipschitz continuous $\tilde{\rho}^\pm :J_\delta\times \R^n_\pm \to (0,\infty)$.

Concerning the transformed version of \eqref{E25}, observe that, for $x=(x',0)$,
\[
g^\pm (t,x)=H'(x)^{-1} f^\pm (t,H(x))=H'(x)^{-1} \big(\lambda^\pm (t,x)\, n(H(x)) \big)
\]
with certain $\lambda^\pm (t,x)\in \R$ due to \eqref{E25}.
Hence
\[
g^\pm (t,x)= \lambda^\pm (t,x) H'(x)^{-1}  n(H(x)).
\]
Now note that
\[
H'(x',0) e_n = n(x', h(x'))
\]
by \eqref{H'(x)} with $n(x',h(x'))=n(y)$ from \eqref{n(y)}.
Therefore,
\[
g^\pm (t,x)=  \lambda^\pm (t,x) e_n,
\]
which implies
\begin{equation}\label{E31}
g^\pm_k(t,x)=0 \quad\text{ for } t \in J, \quad x_n=0,\quad k=1,\ldots ,n-1.
\end{equation}
As the result of this step, we may assume that $\Sigma(t) \equiv \R^{n-1} \times \{0\}$ and the new (discontinuous)
right-hand side $g$ has the same regularity as $v$, i.e.\ the $g^\pm$ are continuous on $J\times \R^n_\pm$
and the $g^\pm (t,\cdot)$ are locally Lipschitz continuous on $\R^n_\pm$.
Furthermore, $g$ satisfies the conditions \eqref{E30} and \eqref{E31}.

It remains to show that (\ref{E29}) is uniquely solvable to the right on $[t_0,t_0 + \delta]$ for some $\delta>0$.
\\[1ex]
{\bf Step 4.} \emph{Local forward uniqueness for (\ref{E29})}.\\[0.5ex]
The first argument exploits the physically motivated transmission condition (\ref{transcond}), respectively (\ref{E30}).
Let $x(\cdot)$ and $\overline{x}(\cdot)$ be two solutions of (\ref{E29}).
We then let\\[-2ex]
\begin{equation}\label{E32}
\phi(t)= |\rho(t) x_n(t)-\overline{\rho}(t) \overline{x}_n(t)|
+ || x_{||}(t) -\overline{x}_{||}(t) || \quad\text{ on } J,
\end{equation}
where $||\cdot ||$ denotes the Euclidean norm, $x_{||}=(x_1,\ldots ,x_{n-1},0)$ is the tangential part of $x$,
\begin{equation}\label{E33}
\rho(t)= \begin{cases}
\tilde{\rho}^+ \big(t,x^\Sigma(t)\big) &\text{ if } x_n(t) \geq 0\\
\tilde{\rho}^- \big(t,x^\Sigma(t)\big) &\text{ if } x_n(t)<0
\end{cases}
\end{equation}
with $\tilde{\rho}^\pm$ from \eqref{tilde-rho} and
\begin{equation}\label{E34}
x^\Sigma(t)=
\frac{1}{2}\big(x_{||}(t)+\overline{x}_{||}(t)\big).
\end{equation}
Let $\overline{\rho}(t)$ be defined analogously, exchanging the roles of $x(\cdot)$ and $\overline{x}(\cdot)$.

We first show that $\phi(\cdot)$ is locally Lipschitz continuous, where it suffices to show this for $\rho(\cdot) x_n(\cdot)$, say.
Moreover, for proving \emph{local} Lipschitz continuity, we may assume that
both $g^\pm$ and $\rho^\pm$ are bounded on the respective domain of definition, since they are locally bounded.
At $\tau\in J$ with $x_n(\tau)\not=0$, the local Lipschitz continuity of $\rho(\cdot)x_n(\cdot)$ follows from that of $\rho(\cdot)$ near $\tau$ together with the Lipschitz continuity of $x(\cdot)$.
If $x_n(\tau)=0$, then
\begin{equation}\label{E35}
|\rho(t) x_n(t)- \rho(\tau) x_n(\tau)|= \big|\rho(t) \big(x_n(t)-x_n(\tau)\big)\big| \leq |\rho|_\infty\, |g_n|_\infty\, |t-\tau|,
\end{equation}
hence the Lipschitz estimate holds for $\rho(\cdot) x_n(\cdot)$.

As a consequence, $\phi(\cdot)$ is a.c.\ and a.e.\ differentiable on $J$. Let
\begin{equation}
J_0=\{t\in J: \rho'(t), \overline{\rho}'(t), x'(t), \overline{x}'(t)
\mbox{ exist} \}.
\end{equation}
We are going to show that $\phi'\leq const \; \phi$ a.e.\ on $J$ and it suffices to show this a.e.\ on $J_0$. We distinguish four different cases, where we start by considering $\tau\in J_0$ such that $x_n (\tau)  <0$, $\overline{x}_n (\tau) <0$.
Then $x_n (t)  <0, \, \overline{x}_n (t) <0$ in a neighborhood of $\tau$, hence
$\rho(t)=\overline{\rho}(t)$ there.
This implies
\[
\phi(t)=\rho(t) |x_n(t)-\overline{x}_n(t)|+ || x_{||}(t) -\overline{x}_{||}(t) ||
\quad \mbox{ near } \tau,
\]
hence
\begin{align*}
|\phi'(t)| \; & \leq  \; |\rho'(t)| \, |x_n(t)-\overline{x}_n(t)|
 +\rho(t) |g_n(t,x(t))-g_n(t,\overline{x}(t))|\\[0.5ex]
 & +\; ||g_{||}(t,x(t))-g_{||}(t,\overline{x}(t))||.
\end{align*}
Consequently, using the Lipschitz continuity of $g$ and $||x||\leq |x_n| + ||x_{||}||$,
\begin{align}\label{E39}
|\phi'(t)| \leq \left( \Big|\frac{\rho'(t)}{\rho(t)}\Big| +L \Big(1+\frac{1}{\rho(t)}  \Big)\right) \rho(t) |x_n (t) - \overline{x}_n (t)|\\ \nonumber
+ L (1+\rho(t)) ||x_{||}(t)-\overline{x}_{||}(t)||,
\end{align}
and therefore
\begin{equation}
\label{E40}
\phi' (\tau) \leq K \; \phi (\tau)
\end{equation}
with
\begin{equation}
K:= \max_J \left(\Big|\frac{\rho'}{\rho}\Big| +L \Big(\rho + 1+\frac{1}{\rho}\Big) \right);
\end{equation}
note that $\rho(\cdot)$ is (locally) bounded from below by some $\alpha >0$.

Next, we consider $\tau\in A:=\{t\in J_0: x_n(t)\geq 0, \overline{x}_n(t)\geq 0\}$,
where it suffices to consider those points $\tau$ which are points of Lebesgue density
of $A$. Given such $\tau$, we have
\[
\phi'(\tau)=\lim_{k\to \infty} \frac{\phi(t_k) -\phi(\tau)}{t_k -\tau}
\]
for every sequence $t_k\to \tau$ with $t_k \neq \tau$. Since $\tau$ is a point of Lebesgue density of $A$, we find such a sequence $(t_k)$ in $A$. Then
\[
\phi(t_k)=\rho(t_k) |x_n(t_k)-\overline{x}_n(t_k)|+ || x_{||}(t_k) -\overline{x}_{||}(t_k) ||,
\]
since $\rho(t_k)=\overline{\rho}(t_k)$. Hence $\phi'(t)$ can be estimated
in the same way as above, i.e.\ (\ref{E40}) holds also for such $\tau$.

The remaining two cases can be treated in exactly the same way. We therefore only consider
$\tau\in B:= \{t\in J_0: x_n(t)\geq 0, \overline{x}_n(t)<0 \}$. In fact, it suffices to consider points $\tau$ of Lebesgue density of $B$. In the considered case, we have
\begin{equation}\label{E41}
\phi(t)= \rho(t) x_n(t) - \overline{\rho}(t) \overline{x}_n(t)
 + || x_{||}(t) -\overline{x}_{||}(t) ||\quad\text{ for } t\in B.
\end{equation}
Hence, since $\phi'(\tau)$ exists and can be obtained from difference quotients with $t_k\in B$, we obtain
\begin{equation}\label{E42}
\begin{split}
& \phi'(\tau)= \frac{\rho'(\tau)}{\rho(\tau)} \rho(\tau) x_n(\tau) - \frac{\overline{\rho}'(\tau)}{\overline{\rho}(\tau)} \overline{\rho}(\tau) \overline{x}_n(\tau)\\
&+ \rho(\tau) g^+_n\big(\tau,x(\tau)\big)- \overline{\rho} (\tau) g^-_n\big(\tau,\overline{x}(\tau)\big)
+\frac{d}{dt} || x_{||}(\tau) -\overline{x}_{||}(\tau) ||.
\end{split}
\end{equation}
By means of (\ref{E30}), we have
\begin{align*}
& \rho(\tau) g^+_n\big(\tau,x(\tau)\big)- \overline{\rho} (\tau) g^-_n\big(\tau,\overline{x}(\tau)\big) =\\[0.5ex]
& \rho(\tau) \big( g^+_n\big(\tau,x(\tau)\big)-g^+_n\big(\tau,x^\Sigma(\tau)\big) \big)
 + \overline{\rho} (\tau) \big( g^-_n\big(\tau,\overline{x}(\tau)\big) - g^-_n\big(\tau,x^\Sigma (\tau)\big) \big).
\end{align*}
Using the Lipschitz continuity of $g$ as well as $||x(t)-x^\Sigma (t)||\leq ||x(t)-\overline{x}(t)||$ and the corresponding inequality for $||\overline{x}(t)-x^\Sigma (t)||$, equation (\ref{E42}) implies
\begin{align*}\label{E43}
|\phi'(\tau)| \leq \left(\Big|\frac{\rho'(\tau)}{\rho(\tau)}\Big| +\Big|\frac{\overline{\rho}'(\tau)}{\overline{\rho}(\tau)}\Big|\right)
|\rho(\tau) x_n(\tau) - \overline{\rho}(\tau) \overline{x}_n(\tau)|\\[0.5ex]
+ L ( 1+ 2 |\rho|_\infty ) || x(\tau) -\overline{x}(\tau) ||;
\end{align*}
recall that $-\overline{x}_n(\tau) >0$.
Splitting $x(\tau)$ and $\overline{x}(\tau)$ into their normal and tangential parts,
this yields (\ref{E40}) with
\begin{equation}
K:= \max_J \left( \Big|\frac{\rho'}{\rho}\Big|
+\Big|\frac{\overline{\rho}'}{\overline{\rho}}\Big|
+ L \Big( 1+ |\rho|_\infty + |\overline{\rho}|_\infty \Big)
\Big(1+ \frac{1}{\rho} + \frac{1}{\overline{\rho}} \Big) \right).
\end{equation}
Recall that both $\rho(\cdot)$, 
$\overline{\rho}(\cdot)$
are (locally) bounded from below by some $\alpha >0$.

Consequently, inequality (\ref{E40}) holds a.e.\ on $J$ with a common $K>0$,
thus $\phi(t)= 0$ on $J$ by Gronwall's Lemma, since $\phi(0)=0$. This means
\begin{equation}
\label{phi-result}
\rho(t) x_n(t) = \overline{\rho}(t) \overline{x}_n(t) \quad \mbox{ and } \quad
x_{||}(t)=\overline{x}_{||}(t) \quad \mbox{ on } J.
\end{equation}
To finish the proof, consider the energy functional
\[
\psi (t)=\frac{1}{2} ||x(t)- \overline{x}(t)||^2 \quad \mbox{ for } t\in J.
\]
Evidently, using (\ref{phi-result})$_2$, we get
\begin{equation}
\label{psi}
\psi' (t)=\big(x_n(t) - \overline{x}_n(t) \big) \,
 \big( g_n (t,x(t)) - g_n(t, \overline{x}(t)) \big) \quad \mbox{ on } J.
\end{equation}
By (\ref{phi-result})$_1$ and the non-degeneracy of $\rho(\cdot)$ and $\overline{\rho}(\cdot)$, both $x_n(\cdot)$ and $\overline{x}_n(\cdot)$ run either in $\R_+^n$ or in $\R_-^n$.
Hence the second argument of $g_n$  in (\ref{psi}) is always in the same halfspace,
i.e.\ $g_n$ is either $g_n^+$ or $g_n^-$.
By the Lipschitz continuity of $g^\pm$ on $J\times \R_\pm^n$, we obtain $\psi'(t)\leq 2 L \psi(t)$ on $J$. Hence $\psi(t)=0$ on $J$, i.e.\ $x(t)=\overline{x}(t)$ on $J$ which ends the proof.
\\[1ex]
{\bf Step 5.} \emph{Local backward uniqueness}.\\[0.5ex]
The vector field $\tilde v:=-v$ satisfies all  assumptions of Theorem~\ref{T1} if $v^\Sigma$ is replaced by $-v^\Sigma$.
Hence \eqref{E12} with $-v$ instead of $v$ and the backward moving $\Sigma (\cdot)$ has unique local forward solvability.
Reversing time, this yield unique local backward solvability of the original problem.
\\
$\mbox{ } \hfill \Box$\\[1ex]
Let us finally remark that in wetting applications, the fluid interface has contact with parts of the boundary of $\Omega$,
typically at a solid wall. This leads to a technically more involved case with moving contact lines
for which the present result is a helpful and necessary starting point.\\[2ex]
{\bf Acknowledgment.}
The author gratefully acknowledges financial support by the German Research Foundation (DFG) within the Collaborative Research Center 1194 \emph{Interaction of Transport and Wetting Processes}, project B01.

Over the last two decades, the author frequently had the great pleasure to discuss about two-phase flows with the late Jan Pr\"u{\ss}.
This has added a lot to my understanding of this topic. Danke, Jan!

\end{document}